\begin{document}

\newcommand{\burl}[1]{\textcolor{blue}{\url{#1}}}
\newcommand{\bemail}[1]{\email{\textcolor{blue}{#1}}}
\newcommand{\supp}{{\rm supp}}
\newcommand{\kot}[1]{ \frac{\sin \pi({#1}) }{\pi ({#1})} }
\newcommand{\kkot}[1]{ \frac{\sin \pi {#1} }{\pi {#1} } }
\newcommand{\hphi}{\widehat{\phi}}  
\newcommand{\Norm}[1]{\frac{#1}{\sqrt{N}}}
\newcommand\be{\begin{equation}}
\newcommand\ee{\end{equation}}
\newcommand\bea{\begin{eqnarray}}
\newcommand\eea{\end{eqnarray}}
\newcommand{\R}{\ensuremath{\mathbb{R}}}
\newcommand{\C}{\ensuremath{\mathbb{C}}}
\newcommand{\Z}{\ensuremath{\mathbb{Z}}}
\newcommand{\Q}{\mathbb{Q}}
\newcommand{\N}{\mathbb{N}}
\newcommand{\foh}{\frac{1}{2}}  
\newtheorem{thm}{Theorem}[section]
\newtheorem{conj}[thm]{Conjecture}
\newtheorem{cor}[thm]{Corollary}
\newtheorem{lem}[thm]{Lemma}
\newtheorem{prop}[thm]{Proposition}
\newtheorem{defi}[thm]{Definition}
\newtheorem{rek}[thm]{Remark}
\newcommand{\mat}[4]{\ensuremath{\left(\begin{array}{ll}#1 &#2 \\ #3 &#4%
\end{array}\right)}}
\newcommand{\ord}{\mathop{\mathrm{ord}}}
\newcommand{\ch}{\mathop{\mathrm{char}}}
\newcommand{\Supp}{\mathop{\mathrm{supp}}}
\newcommand{\Avg}{\mathop{\mathrm{Avg}}}
\renewcommand{\ch}{\mathop{\mathrm{ch}}}
\newcommand{\SL}{\mathrm{SL}}
\newcommand{\GL}{\mathrm{GL}}
\newcommand{\iso}{\cong}
\newcommand{\eps}{\epsilon}
\newcommand{\sgn}{\mathrm{sgn}}
\newcommand{\hfrak}{\mathfrak{h}}

\title*{Maass waveforms and low-lying zeros}
\author{Levent Alpoge, Nadine Amersi, Geoffrey Iyer, Oleg Lazarev, Steven J. Miller and Liyang Zhang}
\institute{Levent Alpoge \at Harvard University, Department of Mathematics, Harvard College, Cambridge, MA 02138  \hfill \\ \bemail{alpoge@college.harvard.edu} \and Nadine Amersi \at Department of Mathematics, University College London, London, WC1E 6BT  \hfill \\ \bemail{n.amersi@ucl.ac.uk} \and Geoffrey Iyer \at Department of Mathematics, UCLA, Los Angeles, CA 90095  \bemail{geoff.iyer@gmail.com} \and Oleg Lazarev \at  Department of Mathematics, Stanford University, Stanford, CA 94305  \hfill \\ \bemail{olazarev@stanford.edu} \and Steven J. Miller \at Department of Mathematics \& Statistics, Williams College, Williamstown, MA 01267  \hfill \\ \bemail{sjm1@williams.edu, Steven.Miller.MC.96@aya.yale.edu} \and Liyang Zhang \at Department of Mathematics, Yale University, New Haven, CT 06520  \hfill \\ \bemail{zhangliyangmath@gmail.com}
}

%
%
\maketitle

\abstract*{The Katz-Sarnak Density Conjecture states that the behavior of zeros of a family of $L$-functions near the central point (as the conductors tend to zero) agrees with the behavior of eigenvalues near 1 of a classical compact group (as the matrix size tends to infinity). Using the Petersson formula, Iwaniec, Luo and Sarnak proved that the behavior of zeros near the central point of holomorphic cusp forms agrees with the behavior of eigenvalues of orthogonal matrices for suitably restricted test functions $\phi$. We prove similar results for families of cuspidal Maass forms, the other natural family of ${\rm GL}_2/\mathbb{Q}$ $L$-functions. For suitable weight functions on the space of Maass forms, the limiting behavior agrees with the expected orthogonal group. We prove this for $\Supp(\widehat{\phi})\subseteq (-3/2, 3/2)$ when the level $N$ tends to infinity through the square-free numbers; if the level is fixed the support decreases to being contained in $(-1,1)$, though we still uniquely specify the symmetry type by computing the 2-level density.}

\abstract{The Katz-Sarnak Density Conjecture states that the behavior of zeros of a family of $L$-functions near the central point (as the conductors tend to zero) agrees with the behavior of eigenvalues near 1 of a classical compact group (as the matrix size tends to infinity). Using the Petersson formula, Iwaniec, Luo and Sarnak proved that the behavior of zeros near the central point of holomorphic cusp forms agrees with the behavior of eigenvalues of orthogonal matrices for suitably restricted test functions $\phi$. We prove similar results for families of cuspidal Maass forms, the other natural family of ${\rm GL}_2/\mathbb{Q}$ $L$-functions. For suitable weight functions on the space of Maass forms, the limiting behavior agrees with the expected orthogonal group. We prove this for $\Supp(\widehat{\phi})\subseteq (-3/2, 3/2)$ when the level $N$ tends to infinity through the square-free numbers; if the level is fixed the support decreases to being contained in $(-1,1)$, though we still uniquely specify the symmetry type by computing the 2-level density.}



\section{Introduction}

In this section we set the stage for our results by quickly reviewing previous work on zeros of $L$-functions, leading up to $n$-level correlations, densities and the conjectured correspondence with random matrix ensembles. As this is a vast field and the readership of this book is likely to have diverse backgrounds and interests, we discuss in some detail the history of the subject in order to put the present problems in context. We concentrate on some of the key theorems and statistics, and refer the reader to the extensive literature for more information. After this quick tour we describe the Katz-Sarnak conjectures for the behavior of low-lying zeros, and then in \S\ref{sec:mainresultsAAILMZ} we state our new results for families of Maass forms (the reader familiar with this field can skip this section and go straight to \S\ref{sec:mainresultsAAILMZ}). The analysis proceeds by using the Kuznetsov trace formula to convert sums over zeros to exponential sums over the primes. Similar sums have been extensively studied by Maier in many papers over the years (see for example \cite{EMaS,Ma,MaP,MaS1,MaS2,MaT}); it is a pleasure to dedicate this chapter to him on the occasion of his 60\textsuperscript{th} birthday.

\subsection{Zeros of $L$-Functions}

The Riemann zeta function $\zeta(s)$ is defined for ${\rm Re}(s) > 1$ by \be \zeta(s) \ := \ \sum_{n=1}^\infty \frac1{n^s} \ = \ \prod_{p\ {\rm prime}}\left(1 - \frac1{p^s}\right)^{-1}; \ee the Euler product expansion is equivalent to the Fundamental Theorem of Arithmetic on the unique factorization of integers into prime powers. Much can be gleaned in this regime. For example, looking at the limit as $s\to 1$ from above shows the sum of the reciprocals of the primes diverge (and with just a little work one gets $\sum_{p < x} 1/p \sim \log\log x$), and hence there are infinitely many prime. The true utility of this function, however, doesn't surface until we consider its meromorphic continuation $\xi(s)$ to the entire complex plane, where \be \xi(s) \ := \ \Gamma(s/2) \pi^{-s/2}\zeta(s) \ = \ \xi(1-s). \ee The product expansion shows $\xi(s)$ has no zeros for ${\rm Re}(s) > 1$, and from the functional equation the only zeros for ${\rm Re}(s) < 0$ are at the negative even integers. The remaining zeros all have real part between 0 and 1; the Riemann Hypothesis \cite{Rie} is the statement that these zeros all have real part of 1/2.

Ever since Riemann's classic paper, researchers have exploited the connections between zeros of $\zeta(s)$ (and later other $L$-functions) to arithmetically important problem to translate information about the zeros to results in number theory. For example, it can be shown that $\zeta(s)$ is never zero on the line ${\rm Re}(s) = 1$. This implies the Prime Number Theorem: the number of primes at most $x$, $\pi(x)$, is ${\rm Li}(x)$ plus a lower order term, where \be {\rm Li}(x) \ := \ \int_2^x \frac{dt}{\log t};\ee  excellent references for this and the subsequent results are \cite{Da, IK}.

Similar results about primes in arithmetic progressions modulo $m$ follow from analogous results about the distribution of zeros of Dirichlet $L$-functions $L(s,\chi) := \sum_n \chi(n) / n^s$, where $\chi$ ranges over all primitive characters modulo $m$. It is worth noting that to study primes congruent to $a$ modulo $m$ it is not enough to study \emph{one} specific Dirichlet $L$-function, but rather we need to understand the entire family coming from all characters of modulus $m$ in order to invoke orthogonality relations to extract information about our progression from averages of $\chi(m)$; this notion of family will be very important in our work later.

After determining main terms, it is natural to ask about the form of the lower order terms. While the Riemann Hypothesis (RH) implies that $\pi(x) = {\rm Li}(x) + O(x^{1/2}\log x)$, neither it nor its generalization to other $L$-functions (GRH) is powerful enough to explain how the distribution of primes modulo $m$ varies with the residue class, as these fluctuation are at the size of the errors from GRH. Chebyshev observed that there appeared to be more primes congruent to 3 modulo 4 than to 1 modulo 4. We now have an excellent theory (see \cite{RubSa}) that explains this phenomenon. A key ingredient is the Grand Simplicity Hypothesis, which asserts that the zeros of these $L$-functions are linearly independent over the rationals.

Assuming RH, the non-trivial zeros of $\zeta(s)$ all have real part equal to 1/2, and may thus be ordered on the line. It therefore makes sense to talk about spacings between adjacent zeros $\rho_j = 1/2 + i \gamma_j$, or better yet spacings between adjacent normalized zeros (where we have normalized so that the average spacing is 1). Recent work has shown powerful connections between these gaps and important arithmetic quantities. For example, we can obtain excellent bounds on the size of the class groups of imaginary quadratic fields through knowing the existence of $L$-functions with multiple zeros at the central point \cite{Go,GZ}, or knowing that a positive percentage of gaps between normalized zeros of the Riemann zeta function are at least a certain fixed fraction of the average spacing \cite{CI}.

The central theme in the above examples is that the more information we know about the zeros of the $L$-functions, the more we can say about arithmetically important questions. We started with just knowledge of the zeros on the line ${\rm Re}(s) = 1$, and then extended to GRH and all non-trivial zeros having real part 1/2, and then went beyond that to the distribution of the zeros on that critical line. In this chapter we expand on this last theme, and explore the distribution of zeros of $L$-functions on the critical line.

\subsection{$n$-Level Correlations and Random Matrix Theory}

While zeros of $L$-functions is a rich subject with an extensive history, our story on the number theory side begins in the 1970s with Montgomery's \cite{Mon} work on the pair correlation of the zeros of $\zeta(s)$. Given an increasing sequence of numbers $\{\alpha_j\}$ and $B \subset \R^{n-1}$ a compact box, the $n$-level correlation $R_n(B)$ is defined by
\begin{eqnarray}\label{eqnlevelcorr}
R_n(B) \ := \ \lim_{N \rightarrow \infty} \frac{  \# \left\{ (\alpha_{j_1}-\alpha_{j_2},\ \dots,\ \alpha_{j_{n-1}} -
\alpha_{j_n}) \in B,\ \ j_i \le N \right\}}{N},
\end{eqnarray} where the indices above are distinct. Instead of using a box (which is equivalent to a sharp cut-off) it's often technically easier to consider a similar version with a smooth test function (see \cite{RS}).

While knowing \emph{all} the $n$-level correlations allows one to determine all the neighbor spacings (see for example \cite{Meh}), computing these for arbitrary $B$ (or for any admissible test function) is well beyond current technology. There are, however, many important partial results. The first is the referred to one of Montgomery \cite{Mon}, who showed that for suitable test functions the $2$-level density agrees with the $2$-level density of eigenvalues of the Gaussian Unitary Ensemble (GUE). There are many ways to view these matrices. The easiest is that these are Hermitian matrices whose upper triangular entries are independently drawn from Gaussians (as the diagonal must be real, we draw from a different Gaussian for these entries than we do for the non-diagonal ones). An alternative definition, which explains the use of the word unitary, deals with the equality of the probability of choosing a matrix and its conjugation by a unitary matrix; note this is equivalent to saying the probability of a matrix is independent of the base used to write it down. From a physical point of view these matrices represent the Hamiltonian of a system. What matters are their eigenvalues, which correspond to the energy levels. While the entries of the matrix change depending on the basis used to write it down, the eigenvalues do not, which leads us to the unitary invariance condition. These and other matrix families had been extensively studied by Dyson, Mehta and Wigner among many others; see \cite{Con,FM,For,Ha,Meh,MT-B} and the multitude of references therein for more on the history and development of the subject.

This suggested a powerful connection between number theory and random matrix theory, which was further supported by Odlyzko's investigations \cite{Od1,Od2} showing agreement between the spacings of zeros of $\zeta(s)$ and the eigenvalues of the GUE. Subsequent work by Hejhal \cite{Hej} on the triple correlation of $\zeta(s)$ and Rudnick-Sarnak \cite{RS} on the $n$-level correlations for all automorphic cuspidal $L$-functions, again under suitable restrictions, provided additional support for the conjectured agreement between the limiting behavior of zeros of $L$-functions (as we move up the critical line) and eigenvalues of $N\times N$ matrices (as $N\to\infty$).

These results indicated a remarkable universality in behavior; while there are many random matrix ensembles, it appeared that only one was needed for number theory. A little thought, however, shows that this might not be the full story. The reason is that the $n$-level correlations are insensitive to the behavior of finitely many zeros. In other words, we can remove finitely many zeros and not change $R_n(B)$. This is particularly troublesome, as there are many problems where only a few zeros matter. For example, the Birch and Swinnerton-Dyer conjecture \cite{BS-D1,BS-D2} states that the order of vanishing of the $L$-function associated to the elliptic curve equals the rank of its Mordell-Weil group; thus in studies on this problem we \emph{only} care about what is happening at the central point, and not at all about what is happening far away on the critical line.

Later studies by Katz and Sarnak \cite{KaSa1,KaSa2} confirmed that more care is needed. The $n$-level correlations of the zeros of $L$-functions agree not only with those from the GUE, but also with those coming from the classical compact groups. The advantage of the latter is that the probability a matrix is chosen is derived from the Haar measure on the group, which is a canonical choice. This is in sharp contrast to the definition of the GUE, where we fix a probability distribution and choose independent entries from it, which begs the question why one distribution was chosen over another (for the GUE, the answer is that the Gaussian is forced upon us by our assumption of the probability being invariant under unitary transformations of the basis). They proved that as $N\to\infty$ the $n$-level correlations of the eigenvalues are the same for all the classical compact groups (unitary, symplectic, and orthogonal, split or not split by sign). Thus one could just as easily say that the zeros of $\zeta(s)$ behave like the eigenvalues of orthogonal matrices instead of the GUE.

This led Katz and Sarnak to introduce a new statistic that is both able to distinguish the different classical compact groups and which depends on the behavior of eigenvalues near 1. We briefly describe the comparisons between number theory and random matrix theory. If we assume the Riemann hypothesis then the non-trivial zeros have real part 1/2 and we may write them as $\rho_j = 1/2 + i \gamma_j$ for $\gamma_j$ real. On the random matrix theory side, the classical compact groups are unitary matrices, and we can therefore write their eigenvalues as $e^{i\theta_k}$ with $\theta_k$ real. From intuition gleaned from earlier results, as well as function field analogues, Katz and Sarnak were led to conjecture that in the appropriate limits the behavior of zeros near 1/2 agree with the behavior of eigenvalues near 1 (more generally, one can also compare values of $L$-functions and characteristic polynomials of matrices).

\subsection{$n$-level Densities and the Katz-Sarnak Philosophy}\label{sec:nleveldensitiesKatzSarnak}

Unfortunately, it is not possible to compare just the zeros of one $L$-function near the central point to the eigenvalues of one matrix. As in many problems in analytic number theory, we need to be able to execute some type of averaging and take some kind of limit in order to isolate out a main term and make progress. For the $n$-level correlations (or, equivalently, for Odlyzko's work on spacings between adjacent zeros), one $L$-function provides infinitely many zeros, and the average spacing between zeros at height $T$ is on the order of $1/\log T$. Thus if we go high up, we are essentially averaging over the zeros of that $L$-function, and can isolate out a universal, main term behavior. If instead we concentrate on the low-lying zeros, those near the central point, the situation is very different. To each $L$-function $L(s,f)$ we can associate a quantity, called the analytic conductor $c_f$, such that the first few zeros are of size $1/\log c_f$. If we rescale so that these zeros are of mean spacing one, then given any constant $C$ there are essentially a finite number (depending basically just on $C$) that are at most $C$.

In order to make progress we need to collect a large number of $L$-functions which should behave similar and are naturally connected. We call such a collection a \emph{family} of $L$-functions. The definition of what is a family is still a work in progress (see \cite{DM2} among others), but most natural collections of $L$-functions are. Examples include families of Dirichlet characters (either all of a given conductor, all whose conductor is in a given range say $[N, 2N]$, or just quadratic characters whose conductor is in a range), cuspidal newforms (and very important subsets, one or two parameter families of elliptic curves), symmetric powers of cusp forms, and so on. Collections that are not families would include arbitrary subsets, for example, cusp forms whose third Fourier coefficient is 2 modulo 5, or cusp forms whose first zero above the central point is at least twice the average. Typically as the conductors (or range) grows we have more and more $L$-functions in the family. The Katz-Sarnak philosophy is that if we take averages of statistics of zeros over the family then in the limit it will converge and agree with the corresponding statistic for the scaling limit of a classical compact group as the matrix size tends to infinity.

The main statistic we study in this paper is their $n$-level density. For convenience of exposition we assume the Generalized Riemann Hypothesis for $L(s,f)$ (and thus all the zeros are of the form $1/2 + i\gamma_{j;f}$ with $\gamma_{j;f}$ real), though the statistic below makes sense even if GRH fails. Let $\phi(x) = \prod_{j=1}^n \phi_j(x_j)$ where each $\phi_j$ is an even Schwartz functions such that the Fourier transforms \be \widehat{\phi_j}(y) \ := \ \int_{-\infty}^\infty \phi_j(x) e^{-2\pi i xy} dx\ee are compactly supported. The $n$-level density for $f$ with test function $\phi$ is
\begin{eqnarray}
D_n(f,\phi) & \ = \ &  \sum_{j_1, \dots, j_n \atop j_\ell \neq j_m} \phi_1\left(L_f \gamma_{j_1;f}\right) \cdots \phi_n\left(L_f
\gamma_{j_n;f}\right),
\end{eqnarray} where $L_f$ is a scaling parameter which is frequently related to the conductor. The idea is to average over similar $f$, and use the explicit formula to relate this sum over zeros to a sum over the Fourier coefficients of the $L$-functions. See for example \cite{ILS}, the seminal paper in the subject and the first to explore these questions, and see \cite{RS} for a nice derivation of the explicit formula for general automorphic forms.

The subject is significantly harder if the conductors vary in the family, as then we cannot just pass the averaging over the forms through the test function to the Fourier coefficients of the $L$-functions. If we only care about the 1-level density, then we may rescale the zeros by using the average log-conductor instead of the log-conductor; as this is the primary object of study below we shall rescale all the $L$-functions in a family by the same quantity, which we denote $R$, and we emphasize this fact by writing $D_n(f,\phi,R)$. For more on these technical issues, see \cite{Mil0, Mil1}, which studies families of elliptic curves where the variation in conductors must be treated. There it is shown that if the conductors vary within a family then this global renormalization leads to problems, and in the $2$-level computations terms emerge where we cannot just pass the averaging through the test function. In some problems, however, it is important to compute the $n$-level densities. One application is to obtain significantly better bounds on the order of vanishing at the central point (see \cite{HM}). Another is to distinguish orthogonal candidates if the $1$-level can only be computed for small support; we will elaborate on this below.

Given a family $\mathcal{F} = \cup_N \mathcal{F}_N$ of $L$-functions with conductors tending to infinity, the $n$-level density $D_n(\mathcal{F},\phi,R;w)$ with test function $\phi$, scaling $R$ and a non-negative weight function $w$ is defined by \be D_n(\mathcal{F},\phi,R;w) \ := \ \lim_{N \to \infty} \frac{\sum_{f\in \mathcal{F}_N} w(f) D_n(f,\phi,R;w)  }{ \sum_{f\in \mathcal{F}_N} w(f)}. \ee The advantage of this statistic is that individual zeros can contribute in the limit, with most of the contribution coming from the zeros near the central point due to the rapid decay of the test functions. Further, as we are averaging over similar forms there is a hope that there is a nice limiting behavior. In most applications we really have $w_T(t) = w(t/T)$, but suppress the subscript $T$ as its understood.

Katz and Sarnak \cite{KaSa1, KaSa2} proved that the $n$-level density is different for each classical compact group, and found nice determinant expansions for them. Set $K(y) := \kkot{y}$ and $K_\epsilon(x,y) := K(x-y) + \epsilon K(x+y)$ for $\epsilon = 0, \pm 1$. They proved that if $G_N$ is either the family of $N\times N$ unitary, symplectic or orthogonal families (split or not split by sign), the $n$-level density for the eigenvalues converges as $N\to\infty$ to \bea & & \int \cdots \int \phi(x_1,\dots,x_n) W_{n,G}(x_1,\dots,x_n) dx_1 \cdots dx_n \nonumber\\ & & \ \ \ \ \  \ = \ \int \cdots \int \hphi(y_1,\dots,y_n) \widehat{W}_{n,G}(y_1,\dots,y_n) dy_1 \cdots dy_n, \eea where
\begin{eqnarray}\label{eqdensitykernels}
W_{m,{\rm SO(even)}}(x) &\ =\ & \det (K_1(x_i,x_j))_{i,j\leq m} \nonumber\\
W_{m,{\rm SO(odd)}}(x) & = & \det (K_{-1}(x_i,x_j))_{i,j\leq
m}  + \sum_{k=1}^m
\delta(x_k) \det(K_{-1}(x_i,x_j))_{i,j\neq k} \nonumber\\
W_{m, {\rm SO}}(x) & = & \foh W_{m,{\rm SO(even)}}(x) + \foh W_{m,{\rm SO(odd)}}(x)
\nonumber\\ W_{m, {\rm U}}(x) & = & \det (K_0(x_i,x_j))_{i,j\leq
m} \nonumber\\ W_{m, {\rm Sp}}(x) &=& \det(K_{-1}(x_i,x_j))_{i,j\leq m}.
\end{eqnarray} While these densities are all different, for the 1-level density with test functions whose Fourier transforms are supported in $(-1, 1)$ the three orthogonal flavors cannot be distinguished from each other, though they can be distinguished from the unitary and symplectic. Explicitly, the Fourier Transforms for the $1$-level densities are
\begin{eqnarray}
\widehat{W_{1,{\rm SO(even)}} }(u) & = & \delta_0(u) + \foh \eta(u)
\nonumber\\ \widehat{W_{1,SO} }(u) & = & \delta_0(u) + \foh
\nonumber\\ \widehat{W_{1,{\rm SO(odd)}} }(u) & = & \delta_0(u) - \foh
\eta(u) + 1 \nonumber\\ \widehat{W_{1,Sp} }(u) & = & \delta_0(u) -
\foh \eta(u) \nonumber\\ \widehat{W_{1,U} }(u) & = & \delta_0(u),
\end{eqnarray} where $\eta(u)$ is $1$, $1/2$, and $0$ for $|u|$ less than $1$, $1$, and greater than $1$, and $\delta_0$ is the standard Dirac
Delta functional. Note that the first three densities agree for $|u| < 1$ and split (ie, become distinguishable) for $|y| \geq 1$. Thus in order to uniquely specify a symmetry type among the three orthogonal candidates, one either needs to obtain results for support exceeding $(-1, 1)$, or compute the $2$-level density, as that is different for the three orthogonal groups for arbitrarily small support \cite{Mil0, Mil1}.

The Katz-Sarnak Density Conjecture states that the behavior of zeros near the central point in a family of $L$-functions (as the conductors tend to infinity) agrees with the behavior of eigenvalues near 1 of a classical compact group (as the matrix size tends to infinity). For suitable test functions, this has been verified in many families, including Dirichlet characters, elliptic curves, cuspidal newforms, symmetric powers of ${\rm GL}(2)$ $L$-functions, and certain families of ${\rm GL}(4)$ and ${\rm GL}(6)$ $L$-functions; see for example \cite{DM1, DM2, ER-GR, FiM, FI, Gao, Gu, HM, HR, ILS, KaSa2, LM, Mil1, MilPe, OS, RR, Ro, Rub, Ya, Yo2}. This correspondence between zeros and eigenvalues allows us, at least conjecturally, to assign a definite symmetry type to each family of $L$-functions (see \cite{DM2, ShTe} for more on identifying the symmetry type of a family).

For this work, the most important families studied to date are holomorphic cusp forms. Using the Petersson formula (and a delicate analysis of the exponential sums arising from the Bessel-Kloosterman term), Iwaniec, Luo, and Sarnak \cite{ILS} proved that the limiting behavior of the zeros near the central point of holomorphic cusp forms agrees with that of the eigenvalues of orthogonal matrices for suitably restricted test functions. In this chapter we look at the other ${\rm GL}_2/\Q$ family of $L$-functions, Maass waveforms.



\section{Statement of Main Results}\label{sec:mainresultsAAILMZ}

We first describe the needed normalizations and notation for our families of Maass forms, and then conclude by stating our new results and sketching the arguments. The beginning of the proofs are similar to that in all families studied to date: one uses the explicit formula to convert sums over zeros to sums over the Fourier coefficients of the $L$-functions. The difficulty is averaging over the family. In order to obtain support beyond $(-1, 1)$, we have to handle some very delicate exponential sums; these arise from the Bessel-Kloosterman term in the Kuznetsov trace formula. To facilitate applying it, we spend a lot of time choosing tractable weights. This is similar to previous work on cuspidal newforms where the harmonic weights were used to simplify the application of the Petersson trace formula. It is possible to remove these weights, and this is done in \cite{ILS}. For some applications it is important to have unweighted families, in order to talk about the percentage of forms that vanish at the central point to a given order (see \cite{HM}); for our purposes we are primarily interested in obtaining large enough support to uniquely determine the symmetry type, and thus choose our weight functions accordingly.

\subsection{Normalizations and Notation}

We quickly recall the basic properties of Maass forms (see \cite{Iw2,IK,Liu,LiuYe} for details), and then review the 1-level density from the last section with an emphasis on the important aspects for the subsequent computations. We use the standard conventions. Specifically, by $A\ll B$ we mean $|A|\leq c|B|$ for a positive constant $c$. Similarly, $A\asymp B$ means $A\ll B$ and $A\gg B$. We set $e(x):=\exp(2\pi i x)$, and define the Fourier transform of $f$ by \be\widehat{f}(\xi)\ :=\ \int_\R f(x) e(-x\xi) dx.\ee


Let $u$ be a Maass cusp form on $\Gamma_0(N)$, $N$ square-free, with Laplace eigenvalue $\lambda_u =: (\frac{1}{2} + it_u)(\frac{1}{2} - it_u)$. Selberg's $3/16$ths theorem implies that we may take $t_u\geq 0$ or $t_u\in [0,\frac{1}{4}]i$. Next we Fourier expand $u$ as follows: \begin{align}u(z)\ =\ y^{1/2}\sum_{n\neq 0} a_n(u)K_{s-1/2}(2\pi |n| y)e(ny).\end{align} Let \begin{align}\lambda_n(u)\ :=\ \frac{a_n(u)}{\cosh(\pi t_u)^{1/2}}.\end{align} We normalize $u$ so that $\lambda_1(u) = 1$.

The $L$-function associated to $u$ is \begin{equation}L(s,u) \ :=\ \sum_{n\geq 1} \lambda_n n^{-s}.\end{equation} By results from Rankin-Selberg theory the $L$-function is absolutely convergent in the right half-plane ${\Re}(s) > 1$ (one could also use the work of Kim and Sarnak \cite{K,KSa} to obtain absolutely convergent in the right half-plane ${\Re}(s) > 71/64$, which suffices for our purposes). These $L$-functions analytically continue to entire functions of the complex plane, satisfying the functional equation \begin{equation}\Lambda(s,u)\ =\ (-1)^\eps\Lambda(1-s,u),\end{equation} with \begin{equation}\Lambda(s,u) \ :=\ \pi^{-s}\Gamma\left(\frac{s + \eps + it}{2}\right)\Gamma\left(\frac{s+\eps-it}{2}\right)L(s,u).\end{equation} Factoring \begin{equation}1 - \lambda_pX + X^2\ =:\ (1-\alpha_pX)(1-\beta_pX)\end{equation} at each prime (the $\alpha_p,\beta_p$ are the Satake parameters at $p$), we get an Euler product \begin{equation}L(s,u)\ =\ \prod_p (1-\alpha_pp^{-s})^{-1}(1-\beta_pp^{-s})^{-1},\end{equation} which again converges for ${\Re}(s)$ sufficiently large.

For the remainder of the paper $\mathcal{B}_N$ denotes an orthogonal basis of Maass cusp forms on $\Gamma_0(N)$, all normalized so that $\lambda_1 = 1$; thus $\mathcal{B}_N$ is \emph{not} orthonormal under the Petersson inner product on the space. Note we do not take a basis of \emph{newforms} --- that is, the delicate sieving out of oldforms as in \cite{ILS} is not done. Of course such sieving is easy for $N$ prime by the relevant Weyl law.

We use the notation $\Avg(A;w)$ to denote the average of $A$ over $\mathcal{B}_N$ with each element $u\in \mathcal{B}_N$ given weight $w(u)$. That is, \be\Avg(A;w)\ :=\ \frac{\sum_{u\in \mathcal{B}_N} A(u)w(u)}{\sum_{u\in \mathcal{B}_N} w(u)}.\ee

Our main statistic for studying the low-lying zeros (i.e., the zeros near the central point) is the 1-level density; we quickly summarize the needed definitions and facts from \S\ref{sec:nleveldensitiesKatzSarnak}. Let $\phi$ be an even Schwartz function such that the Fourier transform $\widehat{\phi}$ of $\phi$ has compact support; that is,
\be
\widehat{\phi}(y)\ =\ \int_{-\infty}^\infty \phi(x)e^{-2\pi i xy}dx
\ee
and there is an $\eta<\infty$ such that $\widehat{\phi}(y) = 0$ for $y$ outside $(-\eta, \eta)$.

The $1$-level density of the zeros of $L(s, u)$ is
\begin{equation} \label{eqn: 1leveldef}
D_1(u, \phi, R) \ = \
\sum_{\rho}
 \phi \left(\frac{\log R}{2\pi}\gamma \right),
\end{equation}
where $\rho = 1/2+i\gamma$ are the nontrivial zeros of $L(s, u)$, and $\log R$ is a rescaling parameter related to the average log-conductor in the weighted family, whose choice is forced upon us by \eqref{eq:RforcedtobeTTN}. Under GRH all $\gamma$ are real and the zeros can be ordered; while GRH gives a nice interpretation to the 1-level density, it is not needed for our purposes. As $\phi$ is a Schwartz function, most of the contribution comes from the zeros near the central point $s = 1/2$. The different classical compact groups (unitary, symplectic, and orthogonal) have distinguishable 1-level densities for arbitrarily small support; however, the 1-level densities for the even and odd orthogonal matrix ensembles are equal for test functions whose Fourier transforms are supported in $(-1, 1)$. There are two solutions to this issue. One possibility is to perform a more detailed analysis and ``extend support''. The other is to study the 2-level density, which Miller \cite{Mil0, Mil1} showed distinguishes the orthogonal ensembles for arbitrarily large support. For some of the families studied below we are able to calculate the support beyond $(-1,1)$, and we may thus determine which of the orthogonal groups should be the symmetry group; for the other families our support is too limited and we instead study the 2-level density.

\subsection{Main Results} Similar to how the harmonic weights facilitate applications of the Petersson formula to average the Fourier coefficients of cuspidal newforms (see for instance \cite{ILS,MilMo}), we introduce nice, even weight functions to smooth the sum over the Maass forms. As we will see below, some type of weighting is necessary in order to restrict to conductors of comparable size. While our choice does not include the characteristic function of $[T, 2T]$, we are able to localize for the most part to conductors near $T$, and are able to exploit smoothness properties of the weight function in applications of the Kuznetsov trace formula. Further, in problems such as these the primary goal is to have as large support as possible for the Fourier transform of the test function that hits the zeros. For more on these issues see \cite{AM}, where Alpoge and Miller impose even more restrictions on the weight functions, which allow them to increase the support.

We consider the averaged one-level density weighted by two different weight functions of ``nice'' analytic properties. Let $\widehat{H}\in C^\infty\left(\left(-\frac{1}{4}, \frac{1}{4}\right)\right)$ be an even smooth bump function of compact support on the real line, and let $H$ be its Fourier transform. We may of course (by applying this construction to a square root --- recall that the support of a convolution is easily controlled) take $H\geq 0$. We may also take $H$ to have an order $K$ zero at $0$. Let \begin{align}\label{eq:HTdefn} H_T(r)\ :=\ H\left(\frac{r}{T}\right).\end{align} This is essentially supported in a band of length $\asymp T$ about $\pm T$.


Next, in the same way, let $\widehat{h}\in C^\infty\left(\left(-\frac{1}{4},\frac{1}{4}\right)\right)$ be even. We also require $h$ to have an order at least $8$ zero at $0$. Note that, by the same process as above, we may take $h(x)\geq 0$ for all $x\in \R$ and also (by Schwarz reflection) $h(ix)\geq 0$ for all $x\in \R$. Let $T$ be a positive odd integer. We let \begin{align}\label{eq:hTdefn} h_T(r)\ :=\ \frac{\frac{r}{T} h\left(\frac{ir}{T}\right)}{\sinh\left(\frac{\pi r}{T}\right)}.\end{align} This is the same test function used in \cite{AM}, and is essentially supported in a band of length $\asymp T$ about $\pm T$.

By trivially bounding the Fourier integral we observe that \begin{align}H(x+iy),\ h(x+iy)\ \ll \  \exp\left(\frac{\pi |y|}{2}\right).\end{align} Hence \begin{align}H_T(ir)\ \ll \  \exp\left(\frac{\pi |r|}{2T}\right),\end{align} and, using $\sinh(x)\gg e^{|x|}$, we find \begin{align}h_T(r)\ \ll \  \exp\left(-\frac{\pi |r|}{4T}\right).\end{align} These will both be useful in what follows.

In one-level calculations we will take an even Schwartz function $\phi$ such that $\widehat{\phi}$ is supported inside $[-\eta,\eta]$. We suppress the dependence of constants on $h,H,\phi$ and $\eta$ (as these are all fixed), but not $T$ or the level $N$ since one or both of these will be tending to infinity.

We weight each element $u\in \mathcal{B}_N$ by either $H_T(t_u)/||u||^2$ or $h_T(t_u)/||u||^2$, where \begin{align}||u||^2\ = \ ||u||_{\Gamma_0(N)\backslash\hfrak}^2\ = \ \frac{1}{[\SL_2(\Z):\Gamma_0(N)]}\int_{\Gamma_0(N)\backslash\hfrak} u(z)\frac{dxdy}{y^2}\end{align} is the $L^2$-norm of $u$ on the modular curve $Y_0(N)$, and, as before, $\lambda_u = \frac{1}{4} + t_u^2$ is the Laplace eigenvalue of $u$. Recall that \begin{align}[\SL_2(\Z):\Gamma_0(N)]\ =: \ \nu(N)\ = \ \prod_{p\vert N} (p + 1).\end{align}

The averaged weighted one-level density may thus be written (we will see in \eqref{eq:RforcedtobeTTN} that $R\asymp T^2N$ is forced) \begin{align}{D}_1(\mathcal{B}_N,\phi,R;w)\ :=\ \Avg\left(D_1(u,\phi,R); w(t_u)/||u||^2\right),\end{align} where $w(t_u)$ is either $H_T(t_u)$ or $h_T(t_u)$.

The main question is to determine the behavior of ${D}_1(\mathcal{B}_N,\phi,R;w)$ as either the level $N$ or the weight parameter $T$ tends to infinity; specifically, one is generally interested in the corresponding symmetry group. There are now several works \cite{DM2, KaSa1, KaSa2, ShTe} which suggest ways to determine the symmetry group. For our family, they suggest the following conjecture.

\begin{conj} Let $h_T$ be as in \eqref{eq:hTdefn}, $\phi$ an even Schwartz function with $\widehat{\phi}$ of compact support, and $R \asymp T^2 N$. Then \begin{align}\lim_{R\to\infty}{D}_1\left(\mathcal{B}_N,\phi,R;w\right)\ = \ \int_\R \phi(t)W_{1, {\rm SO}}(t)dt,\end{align} where $W_{1, {\rm SO}}:=1 + \frac{1}{2}\delta_0$. In other words, the symmetry group associated to the family of Maass cusp forms of level $N$ is orthogonal.
\end{conj}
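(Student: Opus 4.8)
The plan is to follow the Iwaniec--Luo--Sarnak paradigm, with the Petersson formula replaced by the Kuznetsov trace formula adapted to the weight $w(t_u)/||u||^2$. First I would apply the explicit formula to each $D_1(u,\phi,R)$, turning the sum over the nontrivial zeros of $L(s,u)$ into an archimedean/conductor term plus a sum over prime powers. Writing the local factors through the Satake parameters $\alpha_p,\beta_p$, the prime-power sum takes the shape
\[
-\frac{2}{\log R}\sum_{p}\sum_{\nu\ge 1}\frac{(\alpha_p^\nu+\beta_p^\nu)\log p}{p^{\nu/2}}\,\widehat{\phi}\!\left(\frac{\nu\log p}{\log R}\right),
\]
while the archimedean piece, after division by $\log R$, supplies the leading $\widehat{\phi}(0)=\int_\R\phi$. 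Since the target $\int_\R\phi(t)W_{1,\mathrm{SO}}(t)\,dt=\widehat{\phi}(0)+\foh\phi(0)$ has exactly one further main term, the whole problem reduces to showing that the averaged prime sum produces $\foh\phi(0)$ and nothing else. The \emph{sign} of that secondary term is what distinguishes the orthogonal from the symplectic symmetry, so isolating it with the correct sign is the conceptual heart of the statement.

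Next I would average over $\mathcal{B}_N$. Because each $u$ is normalized so that $\lambda_1(u)=1$, the quantities $\Avg(\lambda_{p^\nu}(u);w)$ are precisely what Kuznetsov computes: a spectral ``diagonal'' term $\delta_{1,p^\nu}$ weighted by the Bessel/spectral transform of $w$, plus a Bessel--Kloosterman term built from $S(1,p^\nu;c)$. The diagonal vanishes for every $p^\nu$ with $\nu\ge 1$, so no main term comes from $\lambda_{p^\nu}$ directly; the terms $\nu\ge 3$ are moreover trivially negligible by the $p^{-\nu/2}$ decay. The decisive $\foh\phi(0)$ instead comes from the $\nu=2$ factor via the Hecke relation $\alpha_p^2+\beta_p^2=\lambda_{p^2}(u)-1$: the constant $-1$ is $u$-independent, and $\frac{2}{\log R}\sum_p\frac{\log p}{p}\widehat{\phi}(2\log p/\log R)\to\int_0^\infty\widehat{\phi}(u)\,du=\foh\phi(0)$ by Mertens/PNT. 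Here the scaling $R\asymp T^2N$ is forced, as in \eqref{eq:RforcedtobeTTN}: the analytic conductor of a level-$N$ Maass form with spectral parameter $t_u\asymp T$ has size $\asymp T^2N$, so dividing by $\log R\asymp\log(T^2N)$ rescales the low-lying zeros to unit mean spacing and makes the archimedean term contribute exactly $\widehat{\phi}(0)$; any other scaling would distort this leading term.

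The main obstacle is showing that everything remaining vanishes, and this is entirely the Bessel--Kloosterman term. The $\nu=1$ sum,
\[
\frac{2}{\log R}\sum_p\frac{\log p}{\sqrt p}\,\Avg(\lambda_p(u);w)\,\widehat{\phi}\!\left(\frac{\log p}{\log R}\right),
\]
is governed by $\sum_c c^{-1}S(1,p;c)$ against a Bessel transform of $w$, and since $\widehat{\phi}$ is supported in $(-\eta,\eta)$ the prime $p$ runs up to $R^{\eta}=(T^2N)^{\eta}$. For $\eta<1$ the Weil bound on Kloosterman sums suffices, but to push $\eta$ toward $3/2$ one must extract genuine cancellation in the joint sum over $c$ and $p$. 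Here the square-free level is essential: for $\Gamma_0(N)$ the moduli $c$ are divisible by $N$, so opening the Kloosterman sum and applying Poisson summation in the level aspect (as $N\to\infty$) yields the extra savings, while the smoothness and compact support of $\widehat{h}$---hence the rapid decay of the spectral transform of $h_T$---control the length of the $c$-sum in the $T$ aspect, which is exactly why the tractable weights were engineered. This delicacy is why I would expect to establish the stated identity only on a restricted range of $\eta$: with $N\to\infty$ through the square-frees one reaches $\Supp(\widehat{\phi})\subseteq(-3/2,3/2)$, whereas at fixed level the admissible support shrinks to $(-1,1)$, and the fully unrestricted conjecture would require Kloosterman--Bessel estimates beyond what is presently available.
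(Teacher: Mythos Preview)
The statement you are asked to prove is labeled a \emph{Conjecture} in the paper; the paper does not prove it in the stated generality (arbitrary compact support for $\widehat{\phi}$), and neither do you. Your final paragraph concedes this: you would only reach $(-3/2,3/2)$ in the level aspect and $(-1,1)$ in the eigenvalue aspect, which is exactly what the paper establishes in its Theorems. So your proposal is not a proof of the conjecture but a correct outline of the paper's strategy toward its partial results, together with an accurate assessment of the current barrier.

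On the overlap with the paper: your use of the explicit formula, the identification of $\tfrac{1}{2}\phi(0)$ via $\alpha_p^2+\beta_p^2=\lambda_{p^2}-1$, the scaling $R\asymp T^2N$, and the reduction to the Bessel--Kloosterman term all match \S3.1 of the paper. Two points of divergence are worth noting. First, you omit the paper's main analytic device: a contour shift that rewrites the spectral integral $\int_\R J_{2ir}(X)\,r\,w_T(r)/\cosh(\pi r)\,dr$ as a residue sum $\sum_{k\ge 0}(-1)^k(2k+1)J_{2k+1}(X)\,w_T\!\left(\tfrac{2k+1}{2}i\right)$ (Proposition~3.3). This is what makes the Bessel--Kloosterman piece tractable and is the reason the weights $h_T,H_T$ were engineered; ``rapid decay of the spectral transform'' alone does not capture it. Second, for the level aspect $N\to\infty$ you invoke Poisson summation after opening the Kloosterman sum, but the paper needs nothing of the sort: once the residue expansion is in hand, the argument of each $J_{2k+1}$ is $4\pi p^{\ell/2}/(cN)$, which is small for large $N$, and a trivial bound $J_n(x)\ll (x/2)^n/n!$ together with Weil on $S(p^\ell,1;cN)$ already gives negligibility for $\eta<3/2$. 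Your proposed route would presumably also work but is more elaborate than required.
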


In \cite{AM} the above conjecture is shown for $N = 1, w = h_T$, with the extra restrictions that $h$ has $2K \ge 8$ zeros at the origin and $\mathrm{supp}(\widehat{\phi})\subseteq (-2 + \frac{2}{2K+1}, 2 - \frac{2}{2K+1})$. Our first result here is in the case where the level $N$ tends to infinity (remember that $N$ must be square-free), $T$ and $K$ are fixed, and $w = w_T$ equals either $h_T$ or $H_T$.

\begin{thm}\label{maintheoremone} Fix $T$ and $K$ and let $R \asymp T^2 N$. Let $H$ be an even, non-negative function with $K$ zeros at 0 and Fourier transform $\widehat{H}\in C^\infty\left(\left(-\frac{1}{4},\frac{1}{4}\right)\right)$, and let $h$ be an even function with 8 zeros at 0 and $\widehat{h}\in C^\infty\left(\left(-\frac{1}{4},\frac{1}{4}\right)\right)$. Let the weights $w=w_T$ be either $H_T$ or $h_T$, where these are the functions given by \eqref{eq:HTdefn} and \eqref{eq:hTdefn}, respectively. Let $\phi$ be an even Schwartz function with $\mathrm{supp}(\widehat{\phi})\subseteq (-\frac{3}{2},\frac{3}{2})$. Then \begin{align}\lim_{N\to\infty\atop N\ {\rm square-free}} {D}_1(\mathcal{B}_N,\phi,R;w_T)\ = \ \int_\R \phi(t)W_{1, {\rm SO}}(t)dt.\end{align}
\end{thm}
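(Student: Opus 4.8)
The plan is to run the usual low-lying-zeros machinery — explicit formula, then family average through a trace formula — but with the Kuznetsov formula for $\Gamma_0(N)$ playing the role the Petersson formula plays in \cite{ILS}, and to extract all of the savings from the congruence $c\equiv 0\pmod N$ forced on the Kloosterman moduli when $N\to\infty$.

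First I would apply the explicit formula to each $L(s,u)$. Shifting contours of $\Lambda'/\Lambda$ against $\phi\left(\frac{\log R}{2\pi}\gamma\right)$ and using the gamma factors in $\Lambda(s,u)$ gives
\begin{align*}
D_1(u,\phi,R) \ &= \ \widehat{\phi}(0)\,\frac{\log c_u}{\log R} + \mathcal{A}(t_u) \\
&\quad - \frac{2}{\log R}\sum_{p}\sum_{k\geq 1}\frac{\alpha_p^k+\beta_p^k}{p^{k/2}}\,\widehat{\phi}\!\left(\frac{k\log p}{\log R}\right)\log p,
\end{align*}
where $c_u\asymp (1+t_u^2)N$ is the analytic conductor and $\mathcal{A}(t_u)$ collects the archimedean (digamma) contributions. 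Since $W_{1,{\rm SO}}=1+\foh\delta_0$, the target is $\int_\R\phi\,W_{1,{\rm SO}}=\widehat{\phi}(0)+\foh\phi(0)$, so after averaging I must produce exactly these two terms and show everything else vanishes.

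Next I would average against the normalized weight $w(t_u)/\|u\|^2$, computing numerator and denominator by Kuznetsov. The denominator is the $m=n=1$ case and is dominated by its diagonal, $\asymp\int w(t)\,t\tanh(\pi t)\,dt$ after the $\nu(N)^{-1}$ normalization built into $\|u\|^2$; the Kloosterman and Eisenstein pieces are lower order. For the prime sums the relevant average is $\Avg(\alpha_p^k+\beta_p^k;w/\|u\|^2)$, obtained by feeding $m=p^k,\ n=1$ into Kuznetsov. Two genuine main terms survive. The conductor term averages, using $c_u\asymp T^2N$ on the support of $w$ together with the forced choice $\log R\asymp\log(T^2N)$, to $\widehat{\phi}(0)(1+o(1))$, while $\mathcal{A}(t_u)$ averages to $O(1/\log R)$. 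The second main term is the orthogonal signature and comes entirely from $k=2$: writing $\alpha_p^2+\beta_p^2=\lambda_p^2-2$ and using the Hecke relation $\lambda_p^2=\lambda_{p^2}+1$, one has $\Avg(\alpha_p^2+\beta_p^2)\to -1$ (since $\Avg(\lambda_{p^2})\to 0$), and then partial summation / the prime number theorem give
\begin{align*}
\frac{2}{\log R}\sum_p\frac{\log p}{p}\,\widehat{\phi}\!\left(\frac{2\log p}{\log R}\right)\ \longrightarrow\ \int_0^\infty\widehat{\phi}(u)\,du \ = \ \foh\phi(0).
\end{align*}

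Everything else must vanish as $N\to\infty$, and this is where the main obstacle lies. The $k=1$ sum needs $\Avg(\lambda_p;w/\|u\|^2)$, whose diagonal is empty ($p\neq 1$), leaving the Bessel--Kloosterman term
\begin{align*}
\sum_{c\equiv 0\,(N)}\frac{S(p,1;c)}{c}\int_{-\infty}^\infty \frac{t\,w(t)}{\cosh(\pi t)}\,J_{2it}\!\left(\frac{4\pi\sqrt{p}}{c}\right)dt,
\end{align*}
and similarly the off-diagonal ($S(p^2,1;c)$) part of the $k=2$ sum. The crux is to show these, summed against $\frac{\log p}{\sqrt p}\widehat{\phi}(\cdot)$ over $p\lesssim R^{3/2}$ and divided by $\log R$, are $o(1)$. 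Here $N\to\infty$ pays off: the congruence forces $c\geq N$, the Weil bound gives $S(p^k,1;c)\ll c^{1/2+\eps}(p^k,c)^{1/2}$, and the explicit decay of the chosen weights (namely $h_T(r)\ll e^{-\pi|r|/4T}$, and the cancellation of $1/\cosh(\pi t)$ against $H_T(it)\ll e^{\pi|t|/2T}$) controls the $t$-integral of the Bessel kernel even for the spectral parameters $t\asymp T$ that the weight selects. Because $p\lesssim (T^2N)^{3/2}$ while $c\geq N$, the Bessel argument $4\pi\sqrt p/c$ stays in a manageable range, and the resulting bound is $o(\log R)$ precisely when the support parameter satisfies $\eta<3/2$ --- the threshold being dictated by the interplay of the square-root cancellation in $c$, the size $c\geq N$ of the moduli, and the length $R^\eta$ of the prime sum. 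I expect this Bessel--Kloosterman estimate, together with verifying that the Eisenstein / continuous-spectrum contribution is negligible after normalization, to be the genuinely hard part; by contrast the tails $k\geq 3$ converge absolutely and are trivially $O(1/\log R)$. Collecting the two surviving main terms yields
\begin{align*}
\lim_{N\to\infty\atop N\ {\rm square-free}}D_1(\mathcal{B}_N,\phi,R;w_T)\ = \ \widehat{\phi}(0)+\foh\phi(0)\ = \ \int_\R\phi(t)\,W_{1,{\rm SO}}(t)\,dt,
\end{align*}
as claimed.
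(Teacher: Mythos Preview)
Your overall architecture matches the paper: explicit formula, then Kuznetsov, isolate the diagonal/conductor main term $\widehat\phi(0)$, extract $\tfrac12\phi(0)$ from the $k=2$ piece via $\alpha_p^2+\beta_p^2=\lambda_{p^2}-\chi_0(p)$, check that the Eisenstein contribution is negligible, and reduce everything else to bounding the Bessel--Kloosterman term. All of that is correct and is exactly how the paper proceeds.

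The gap is at the step you yourself flag as ``the genuinely hard part.'' You assert that the decay of $w_T$ and the cancellation with $1/\cosh(\pi r)$ ``control the $t$-integral of the Bessel kernel,'' but you give no mechanism, and a direct attack on
\[
\int_\R J_{2ir}(X)\,\frac{r\,w_T(r)}{\cosh(\pi r)}\,dr
\]
does not obviously give what you need: you must produce a \emph{positive power} of $X=4\pi p^{\ell/2}/(cN)$, not merely boundedness or decay in $\log(1/X)$, and your remarks about $H_T(it)$ and $e^{-\pi|r|/4T}$ do not yield that. The paper's device is a contour shift: move the line of integration from $\R$ down to $\R-iR$ with $R\to\infty$ and pick up the residues of $1/\cosh(\pi r)$ at $r=-\tfrac{2k+1}{2}i$, obtaining
\[
\int_\R J_{2ir}(X)\,\frac{r\,w_T(r)}{\cosh(\pi r)}\,dr
\ =\ c_1\sum_{k\geq 0}(-1)^k(2k+1)\,J_{2k+1}(X)\,w_T\!\left(\tfrac{2k+1}{2}i\right)\ \bigl[\,+\,O(Xe^{-cT})\,\bigr],
\]
the bracketed error arising only for $w_T=h_T$ from its extra poles at $iT\Z$. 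This is precisely where the analytic hypotheses on $h,H$ earn their keep: the compact Fourier support in $(-\tfrac14,\tfrac14)$ forces $w_T((k+\tfrac12)i)\ll e^{\pi k/(2T)}$, which is crushed by $J_{2k+1}(X)\ll (X/2)^{2k+1}/(2k+1)!$, so the residue sum converges and is $O(X)$. After that the estimate is trivial: Weil on $S(p^\ell,1;cN)$, the power series bound on $J_{2k+1}$, and summing over $p^\ell\le R^\eta$ gives $\mathcal S\ll \nu(N)\,N^{\eta-3/2+\eps}$, whence the threshold $\eta<3/2$. Without this residue expansion (or an equivalent transform that produces a power of $X$), your proposal does not yet close.
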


Notice that the support in Theorem \ref{maintheoremone} exceeds $(-1,1)$, and thus we have uniquely specified which orthogonal group is the symmetry group of the family.

Next we investigate the case where $N$ is fixed and $T$ tends to infinity through odd values. For ease of exposition we take $N=1$.

\begin{thm}\label{maintheoremtwo} Let $h$ be an even function with 8 zeros at 0 and $\widehat{h}\in C^\infty\left(\left(-\frac{1}{4}, \frac{1}{4}\right)\right)$, and define $h_T$ as in \eqref{eq:hTdefn}. Let $\phi$ be an even Schwartz function with $\mathrm{supp}(\widehat{\phi})\subseteq (-1,1)$, and take $R \asymp T^2 N$ with $N=1$. Then \begin{align}\lim_{T\to\infty\atop {T\ {\rm odd}}} {D}_1(\mathcal{B}_1,\phi,R;h_T)\ = \ \int_\R \phi(t)W_{1, {\rm SO}}(t)dt.\end{align}
\end{thm}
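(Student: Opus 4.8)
The plan is to start from the explicit formula, which converts the $1$-level density $D_1(u,\phi,R)$ into a sum over the Fourier coefficients $\lambda_n(u)$ of $L(s,u)$. Concretely, $D_1(u,\phi,R)$ becomes a main term $\widehat{\phi}(0)$ (plus a contribution recording the gamma factors and conductor, which after dividing by $\log R$ yields the scaling), minus a sum of the shape $\frac{2}{\log R}\sum_{p}\sum_{\nu\ge 1}\frac{\lambda_{p^\nu}(u)\log p}{p^{\nu/2}}\,\widehat{\phi}\!\left(\frac{\nu\log p}{\log R}\right)$. Since $\mathrm{supp}(\widehat{\phi})\subseteq(-1,1)$ and $R\asymp T^2N$ with $N=1$ (so $\log R\asymp 2\log T$), only $p^\nu\ll R$ survives, and the $\nu\ge 3$ prime-power terms are negligible. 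Thus everything reduces to understanding the $\nu=1$ and $\nu=2$ sums after averaging against the weight $h_T(t_u)/\|u\|^2$ over the basis $\mathcal{B}_1$, as $T\to\infty$ through odd integers.

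Next I would apply the Kuznetsov trace formula to evaluate $\mathrm{Avg}\!\left(\lambda_{p^\nu}(u);\,h_T(t_u)/\|u\|^2\right)$. The weight $h_T$ is engineered precisely so that it is the right spectral test function for Kuznetsov: the identity transforms the spectral average of $\lambda_n(u)$ into a diagonal (Kronecker $\delta_{m,n}$) main term plus a Bessel--Kloosterman term $\sum_{c}\frac{S(m,n;c)}{c}\int \cdots J_{2it}(\tfrac{4\pi\sqrt{mn}}{c})\,\check h_T(t)\,dt$, together with the normalizing denominator coming from the $n=1$ case. The diagonal term contributes only when $p^\nu=1$, which cannot happen, so for $n=p$ the diagonal vanishes and for $n=p^2$ it is likewise absent; the genuine arithmetic content of the $\nu=2$ term must therefore come either from the diagonal in a slightly reorganized count or, more importantly, from carefully tracking the normalization. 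In practice the main terms reproducing $\int\phi\,W_{1,\mathrm{SO}}$ (namely $\widehat{\phi}(0)+\tfrac12\phi(0)$) emerge from the archimedean/diagonal pieces, and the task is to show the off-diagonal Bessel--Kloosterman contribution is $o(1)$.

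The main obstacle, as the authors flag, is bounding the Bessel--Kloosterman term, and this is exactly where the support restriction to $(-1,1)$ enters in the fixed-level case. The difficulty is that with $N=1$ fixed there is no extra averaging over the level to exploit, so one cannot gain from summing Kloosterman sums $S(p^\nu,1;c)$ over many moduli tied to $N$; the only large parameter is $T$, which controls the spectral localization through $\check h_T$. I would estimate the $t$-integral of $J_{2it}$ against the transform of $h_T$ using the exponential decay $h_T(r)\ll\exp(-\pi|r|/4T)$ established in the excerpt, combined with standard uniform bounds and the asymptotics of Bessel functions, to show that the Kloosterman term is controlled once $p^\nu\ll R\asymp T^2$. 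Pushing the support to exactly $1$ is the break-even point: beyond $\eta=1$ the $\nu=2$ prime-squares would range up to $R^{1}$, and the corresponding exponential sums would no longer be provably small without the kind of additional cancellation that Theorem~\ref{maintheoremone} buys from the square-free level average. So the crux is a delicate stationary-phase / Bessel-integral estimate showing the off-diagonal vanishes in the limit for $\mathrm{supp}(\widehat\phi)\subseteq(-1,1)$, after which assembling the surviving main terms into $\int_\R\phi(t)W_{1,\mathrm{SO}}(t)\,dt$ is routine.
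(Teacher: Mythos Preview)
Your overall architecture is right: explicit formula, then Kuznetsov, then show the Bessel--Kloosterman piece is $o(1)$ after dividing by the total mass $\asymp T^2$. The gap is in the last step, where you propose to bound
\[
\int_\R J_{2ir}(X)\,\frac{r\,h_T(r)}{\cosh(\pi r)}\,dr,\qquad X=\frac{4\pi p^{\ell/2}}{c},
\]
by combining the decay $h_T(r)\ll e^{-\pi|r|/4T}$ with ``standard uniform bounds and the asymptotics of Bessel functions.'' That decay only localizes the integral to $|r|\ll T$; together with the standard fact that $J_{2ir}(X)/\cosh(\pi r)$ is bounded, it yields at best $O(T^2)$ for the integral, which after summing over $p$ and $c$ is far too weak. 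What you actually need is a bound of size $O(p^{\ell/2}/c)$ for the integral (equivalently $O(Y)$ with $Y=X/2\pi$), so that the Weil bound on $S(p^\ell,1;c)$ makes the $c$--sum converge and the remaining $p$--sum is $\ll T^{2\eta}/\log T$, negligible precisely when $\eta<1$.

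The paper obtains this saving in two nontrivial steps that your proposal does not anticipate. First, a contour shift (Proposition~\ref{prop:usefulexpansionbesselterm}) replaces the integral by a residue sum
\[
\sum_{k\ge 0}(-1)^k J_{2k+1}(X)\,(2k+1)\,h_T\!\left(\tfrac{2k+1}{2}i\right)
\]
(plus a negligible contribution from the poles of $h_T$ itself); this is where the analytic design of $h_T$ is used, not merely its real-axis decay. Second, an exponential-sum identity (Appendix~II, via the integral representation of $J_k$ and Poisson summation) rewrites this residue sum as $c_8 T\,S_h(Y)+O(Y)$ with $S_h(Y)$ a finite sum over $|\alpha|<T/2$ of terms $e(Y\sin(\pi\alpha/T))\,\tilde{\tilde h}(\tfrac{\pi Y}{T}\cos(\pi\alpha/T))$, and then Euler--Maclaurin gives $S_h(Y)\ll Y/T$. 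Neither the contour shift nor the Poisson/Euler--Maclaurin step is a ``standard Bessel estimate''; both are essential, and the latter is exactly what produces the factor of $T^{-1}$ that makes $\eta<1$ the break-even point. Your sketch should identify these two mechanisms explicitly rather than invoking a generic stationary-phase bound.
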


We also get a similar, though slightly weaker, result for the weight function $w=H_T$ if we allow $K = \ord_{z=0} H(z)$ to vary. For the argument given we invoke the work of \cite{ILS} twice, since we reduce the Bessel-Kloosterman term of the Kuznetsov trace formula to sum of Kloosterman terms arising in the Petersson trace formula. Since \cite{ILS} use GRH (specifically, for Dirichlet $L$-functions and $L$-functions associated to symmetric squares of holomorphic cusp forms), we must, too.

\begin{thm}\label{maintheoremthree} Assume GRH for Dirichlet $L$-functions and symmetric squares of holomorphic cusp forms of level 1. Let $H$ be an even, non-negative function with $K$ zeros at 0 and Fourier transform $\widehat{H}\in C^\infty\left(\left(-\frac{1}{4},\frac{1}{4}\right)\right)$, and let the weights $w=w_T$ be $H_T$, which is given by \eqref{eq:HTdefn}. Let $\phi$ be an even Schwartz function with $\Supp(\widehat{\phi})\subseteq (-1 + \frac{1}{5+2K}, 1 - \frac{1}{5+2K})$. Take $R \asymp T^2 N$. Then \begin{align}\lim_{T\to\infty} {D}_1(\mathcal{B}_1,\phi,R;H_T)\ = \ \int_\R \phi(t)W_{1, {\rm SO}}(t)dt.\end{align}
\end{thm}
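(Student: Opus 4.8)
The plan is to follow the same architecture as the proof of Theorem~\ref{maintheoremtwo}, i.e.\ start from the explicit formula to write $D_1(\mathcal{B}_1,\phi,R;H_T)$ as a main term (the $\delta_0$ coming from the conductor/archimedean piece) plus a sum over primes $p$ and prime powers $p^m$ weighted by the averaged Satake data $\Avg(\lambda_{p^m};H_T(t_u)/\|u\|^2)$. The averaging over the basis $\mathcal{B}_1$ is executed via the Kuznetsov trace formula, which converts the average of Hecke eigenvalues into a diagonal (delta) term plus a Bessel--Kloosterman term. The diagonal term produces the identity contribution matching $W_{1,{\rm SO}}$, so the whole game is to show that the Bessel--Kloosterman term is negligible as $T\to\infty$ for $\Supp(\widehat\phi)$ in the stated range. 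Because the weight here is $H_T$ rather than $h_T$, the spectral transform $\int H_T(r)\,(\text{Bessel kernel})\,dr$ does not simplify as cleanly as in the $h_T$ case, and this is exactly where the argument must diverge from Theorem~\ref{maintheoremtwo}.

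The key idea, as flagged in the paragraph preceding the statement, is to \emph{reduce the Bessel--Kloosterman term of Kuznetsov back to the Kloosterman sums appearing in the Petersson formula}, and then quote the exponential-sum estimates already proved by Iwaniec--Luo--Sarnak \cite{ILS}. Concretely, I would insert the integral representation of the relevant Bessel function, interchange the $r$-integral (against $H_T$) with the sum over moduli $c$ and the Kloosterman sums $S(m,n;c)$, and recognize the resulting $c$- and $n$-sums as (a smoothed version of) the sums that \cite{ILS} control after opening the Kloosterman sums and applying the Weil bound together with their delicate analysis. The role of the order-$K$ zero of $H$ at the origin is to supply decay of $\widehat H$ (equivalently smoothness/vanishing of $H_T$ near $0$), which improves convergence of the $c$-sum and is precisely what lets the admissible support grow with $K$ like $1-\frac{1}{5+2K}$; tracking this gain through the reduction is the bookkeeping heart of the proof. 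The invocation of GRH for Dirichlet $L$-functions and for symmetric-square $L$-functions of level-$1$ holomorphic forms enters exactly because \cite{ILS} use it in those exponential-sum estimates, so once the reduction is in place we inherit their hypotheses verbatim.

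The main obstacle I expect is controlling the prime-power terms and the transition region of the Bessel kernel under the $H_T$ weight. For $h_T$ the factor $1/\sinh(\pi r/T)$ gives an exponentially decaying weight $h_T(r)\ll \exp(-\pi|r|/4T)$, which tames the oscillatory Bessel integral; by contrast $H_T(ir)\ll\exp(\pi|r|/2T)$ only, so the spectral integral $\int H_T(r) r\tanh(\pi r)\,J_{2ir}(x)\,dr$ must be estimated by stationary phase or by deforming the contour, and one must show that the growth of $H_T$ does not overwhelm the decay of the Bessel factor in the range of $c$ that matters. Making this uniform in $T$ while simultaneously extracting the $K$-dependent gain in support is the delicate step. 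A secondary technical point is that we work with the full basis $\mathcal{B}_1$ rather than newforms, so I would check that the old/new distinction is harmless for $N=1$ (as noted in the excerpt, sieving is trivial here), and that the square contribution from $m=2$ Satake powers, handled via the symmetric-square $L$-function under GRH, contributes only to lower order. Once the Bessel--Kloosterman term is shown to vanish in the limit for $\Supp(\widehat\phi)\subseteq(-1+\tfrac{1}{5+2K},1-\tfrac{1}{5+2K})$, the surviving main term integrates against $\phi$ to give exactly $\int_\R \phi(t)W_{1,{\rm SO}}(t)\,dt$, completing the proof.
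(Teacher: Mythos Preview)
Your architecture is right and matches the paper's, but you mislocate where the order-$K$ zero of $H$ does its work, and this is not a cosmetic point: it is precisely the mechanism that produces the support condition $\eta < 1 - \tfrac{1}{5+2K}$. The contour shift you allude to (the paper's Proposition~\ref{prop:usefulexpansionbesselterm}) replaces the spectral integral by a residue sum
\[
\int_\R J_{2ir}(X)\frac{rH_T(r)}{\cosh(\pi r)}\,dr \;=\; c_1\sum_{k\ge 0}(-1)^k J_{2k+1}(X)\,(2k+1)\,H\!\left(\tfrac{2k+1}{2T}\,i\right),
\]
and it is \emph{this $k$-sum}, not the $c$-sum, that the zero of $H$ at $0$ controls. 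For $k\gg X\asymp T^\eta$ the Bessel factor $J_{2k+1}(X)$ is tiny; for $k\ll T^\eta$ the Bessel factor is merely bounded, and the saving comes entirely from $H\bigl(\tfrac{2k+1}{2T}i\bigr)\ll (k/T)^K$. Summing $k^{1/2+\eps}(k/T)^K$ over $k\ll T^\eta$ (after inserting the ILS bound on $Q_k^*(m;c)$, which absorbs the $c$-sum via its $(\log 2c)^{-2}$ and carries the factor $T^\eta$) gives $T^{\frac{5}{2}\eta-(1-\eta)K}$, and demanding this be $o(T^2)$ yields exactly $\eta<1-\tfrac{1}{5+2K}$. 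Your claim that the zero at $0$ ``supplies decay of $\widehat H$'' and ``improves convergence of the $c$-sum'' is not the mechanism at play.

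Relatedly, your worry that the growth $H_T(ir)\ll e^{\pi|r|/2T}$ might overwhelm the Bessel decay is a non-issue once the contour is moved: you only ever evaluate $H$ at the points $\tfrac{2k+1}{2T}i$, and for the $k$ that matter ($k\ll T^\eta\ll T$) these arguments are near $0$, where $H$ is \emph{small}, not large. The genuine content of the proof is recognising the residue sum above as a weighted combination of the Petersson-formula Kloosterman terms $Q_{2k+2}^*(1;c)$ from \cite{ILS}, invoking their GRH-conditional bound (this is where GRH for Dirichlet $L$-functions enters), and disposing of the $\ell=2$ prime-square contribution via GRH for symmetric squares, exactly as you say. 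Once you relocate the role of the $K$-fold zero to the $k$-sum, your plan goes through.
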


Notice the support in Theorems \ref{maintheoremtwo} and \ref{maintheoremthree} is too small to uniquely determine which orthogonal symmetry is present (this is because the one-level densities of the orthogonal flavors all agree inside $(-1,1)$). At the cost of more technical arguments, Alpoge and Miller \cite{AM} are able to extend the support beyond $(-1,1)$ when weighting by $h_T$, thereby determining the symmetry group to be orthogonal. In this work we instead compute the 2-level density, which provides a second proof that the symmetry type of the family of Maass cusp forms on $\SL_2(\Z)$ is orthogonal. The 2-level density is defined in \eqref{two level definition}. As any support for the 2-level density suffices to uniquely determine the symmetry group, we do not worry about obtaining optimal results.

\begin{thm}\label{maintheoremfour} Let $w_T$ equal $h_T$ or $H_T$, $R \asymp T^2$, and let \be \mathcal{N}(-1)\ :=\ \frac{1}{\sum_{u\in \mathcal{B}_1} \frac{w_T(t_u)}{||u_u||^2}}  \sum_{u: (-1)^{\eps+\eps'+1} = -1} \frac{w_T(t_u)}{||u||^2} \ee be the weighted percentage of Maass forms in $\mathcal{B}_1$ with odd functional equation. Write \begin{align}D_2(\mathcal{B}_1,\phi_1,\phi_2,R;w_T)\ :=\ \Avg\left(D_2(\mathcal{B}_1,u,\phi_1,\phi_2,R); w_T(t_u) / ||u||^2\right), \end{align} with \begin{eqnarray}\label{two level definition} D_2(\mathcal{B}_1,u,\phi_1,\phi_2,R;w_T)&\ :=\ & \sum_{i\neq \pm j} \phi_1\left(\frac{\log{R}}{2\pi}\gamma_i\right)\phi_2\left(\frac{\log{R}}{2\pi}\gamma_j\right) \nonumber\\&=& D_1(\mathcal{B}_1,u,\phi_1,R;w_T) D_1(\mathcal{B}_1,u,\phi_2,R;w_T) \nonumber\\ & & \ \ -\ 2D_1(\mathcal{B}_1,u,\phi_1\phi_2,R;w_T)\nonumber\\ & & \ \ +\ \delta_{(-1)^{\eps+\eps'+1},-1} \phi_1(0)\phi_2(0),\end{eqnarray} the average 2-level density of the weighted family of level 1 Maass cusp forms, and the 2-level density of $u\in \mathcal{B}_1$, respectively. Let $f \ast g$ denote the convolution of $f$ and $g$. Then, for $\eps\ll 1$ and $\widehat{\phi_1},\widehat{\phi_2}$ even Schwartz functions supported in $(-\eps,\eps)$, \begin{align}\lim_{T\to\infty} D_2(\phi_1,\phi_2,R;w_T) &= \left(\frac{\phi_1(0)}{2} + \widehat{\phi_1}(0)\right)\left(\frac{\phi_2(0)}{2} + \widehat{\phi_2}(0)\right) \nonumber\\&\quad\quad+ 2\int_\R |x|\widehat{\phi_1}(x)\widehat{\phi_2}(x) dx \nonumber\\&\quad\quad- (1-\mathcal{N}(-1))\phi_1(0)\phi_2(0) - 2(\widehat{\phi_1} \ast \widehat{\phi_2})(0),\end{align} agreeing with the 2-level density of the scaling limit of an orthogonal ensemble with proportions of $\mathcal{N}(-1)$ ${\rm SO}({\rm odd})$ matrices and $1-\mathcal{N}(-1)$ ${\rm SO}({\rm even})$ matrices.
\end{thm}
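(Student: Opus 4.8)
The plan is to start from the pointwise identity for $D_2$ recorded in \eqref{two level definition}, which already writes the two-level density of each $u$ as $D_1(u,\phi_1)D_1(u,\phi_2) - 2D_1(u,\phi_1\phi_2) + \delta_{(-1)^{\eps+\eps'+1},-1}\,\phi_1(0)\phi_2(0)$. Since averaging over $\mathcal{B}_1$ against $w_T(t_u)/\|u\|^2$ is linear, I would treat the three summands separately, disposing of the easy two first. The sign-indicator term averages to $\mathcal{N}(-1)\phi_1(0)\phi_2(0)$ by the definition of $\mathcal{N}(-1)$. The middle term is a single one-level density, so the computation underlying Theorem~\ref{maintheoremtwo} (namely $\Avg(D_1(u,\psi))\to \tfrac{\psi(0)}{2}+\widehat{\psi}(0)$, applied to $\psi=\phi_1\phi_2$) makes it contribute $-2\bigl(\tfrac{\phi_1(0)\phi_2(0)}{2}+(\widehat{\phi_1}\ast\widehat{\phi_2})(0)\bigr) = -\phi_1(0)\phi_2(0)-2(\widehat{\phi_1}\ast\widehat{\phi_2})(0)$, using $\widehat{\phi_1\phi_2}=\widehat{\phi_1}\ast\widehat{\phi_2}$.

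The real work is the first term $\Avg\bigl(D_1(u,\phi_1)D_1(u,\phi_2)\bigr)$. I would feed each factor through the explicit formula, writing $D_1(u,\phi_j)=\widehat{\phi_j}(0)+P_1^{(j)}(u)+P_2^{(j)}(u)+\cdots$, where $P_\nu^{(j)}$ is the prime sum $-\tfrac{2}{\log R}\sum_p \tfrac{\log p}{p^{\nu/2}}(\alpha_p^\nu+\beta_p^\nu)\widehat{\phi_j}(\tfrac{\nu\log p}{\log R})$ and $\alpha_p+\beta_p=\lambda_p$. Expanding the product and averaging, the deterministic archimedean pieces together with the averaged $\nu=2$ sums reproduce the product of the one-level means $\bigl(\tfrac{\phi_1(0)}{2}+\widehat{\phi_1}(0)\bigr)\bigl(\tfrac{\phi_2(0)}{2}+\widehat{\phi_2}(0)\bigr)$; here one reuses $\Avg(\lambda_{p^2})\to 0$, hence $\Avg(\alpha_p^2+\beta_p^2)\to -1$, exactly as in the one-level proof. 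The genuinely new contribution is the covariance of the two leading prime sums: for primes $\lambda_p\lambda_q=\lambda_{pq}+\delta_{p=q}$, and the Kuznetsov average kills $\lambda_{pq}$ (off-diagonal) while $\delta_{p=q}$ survives, so
\begin{align}
\Avg\bigl(P_1^{(1)}P_1^{(2)}\bigr)\ \longrightarrow\ \frac{4}{(\log R)^2}\sum_p\frac{(\log p)^2}{p}\,\widehat{\phi_1}\!\left(\tfrac{\log p}{\log R}\right)\widehat{\phi_2}\!\left(\tfrac{\log p}{\log R}\right)\ \longrightarrow\ 2\int_\R |x|\,\widehat{\phi_1}(x)\widehat{\phi_2}(x)\,dx,
\end{align}
the last step by partial summation (Mertens), which turns $\tfrac1{\log R}\sum_p\tfrac{\log p}{p}F(\tfrac{\log p}{\log R})$ into $\int_0^\infty F$ with $F(x)=x\,\widehat{\phi_1}(x)\widehat{\phi_2}(x)$, and the integrands are even. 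Every remaining cross term ($P_1\times P_2$, the $P_2\times P_2$ covariance, higher $\nu$) carries either an extra $1/\log R$ or an absolutely convergent prime sum, hence vanishes in the limit.

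Collecting the three contributions gives precisely the claimed expression, and a direct computation of the orthogonal kernels in \eqref{eqdensitykernels} identifies it as the two-level density of an orthogonal ensemble split $\mathcal{N}(-1):1-\mathcal{N}(-1)$ between $\mathrm{SO}(\mathrm{odd})$ and $\mathrm{SO}(\mathrm{even})$; note the term $2\int|x|\widehat{\phi_1}\widehat{\phi_2}$ is what separates orthogonal from unitary and symplectic. I expect the main obstacle to be exactly where Theorems~\ref{maintheoremone}--\ref{maintheoremthree} already concentrate their effort: controlling the Bessel--Kloosterman (off-diagonal) terms of the Kuznetsov formula. In the two-level setting these now appear for the \emph{double} prime sum $\sum_{p,q}$ with $p,q\ll R^\eps$, so I must show that $\Avg(\lambda_{pq})$, $\Avg(\lambda_{p^2q^2})$, and the like are negligible uniformly, which is what forces the support radius $\eps\ll 1$. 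Since any positive $\eps$ already pins down the symmetry type through the $|x|$-integral above, I would not optimize $\eps$ and would keep the Kloosterman estimates as crude as the earlier sections allow.
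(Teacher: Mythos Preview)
Your proposal is correct and matches the paper's proof essentially step for step: decompose $D_2$ via \eqref{two level definition}, dispatch the $-2D_1(u,\phi_1\phi_2)$ and sign-indicator terms using the one-level results and the definition of $\mathcal{N}(-1)$, expand $D_1(u,\phi_1)D_1(u,\phi_2)$ through the explicit formula, apply Kuznetsov to the resulting averages of $\lambda_{p^\ell}\lambda_{q^{\ell'}}$ so that only the diagonal $p^\ell=q^{\ell'}$ survives, and finish with partial summation to produce $2\int_\R |x|\widehat{\phi_1}(x)\widehat{\phi_2}(x)\,dx$. The only cosmetic difference is that you write the prime sums in the Rudnick--Sarnak form with $\alpha_p^\nu+\beta_p^\nu$ (so that the $\phi_j(0)/2$ piece emerges from the deterministic part of $P_2$), whereas the paper keeps the $\phi(0)/2$ explicit from the Gamma factors and writes the prime sums directly in terms of $\lambda_{p^\ell}$; the two bookkeepings are of course equivalent.
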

A similar result holds for $\mathcal{B}_N$ --- all the calculations will be standard given our work on the one-level densities.


\subsection{Outline of Arguments}

By a routine application of the explicit formula we immediately reduce the problem to studying averages of Hecke eigenvalues over the space of Maass cusp forms of level $N$. For this we apply the Kuznetsov trace formula, as found in \cite{KL}. We are quickly reduced to studying a term of shape \begin{align}\nu(N)\sum_{c\geq 1} \frac{S(m,1;cN)}{cN}\int_\R J_{2ir}\left(\frac{4\pi\sqrt{m}}{cN}\right)\frac{r w_T(r)}{\cosh(\pi r)}dr.\end{align} In all cases the idea is to move the contour from $\R$ to $\R - iY$ with $Y\to\infty$. The properties of the weights $h_T$ or $H_T$ ensure that the integral along the moving line vanishes in the limit, so all that is left in place of the integral is the sum over poles, of shape (up to negligible error in the case of $h_T$, which also has poles of its own) \begin{align}\sum_{k\geq 0} (-1)^k J_{2k+1}\left(\frac{4\pi\sqrt{m}}{cN}\right) (2k+1)w_T\left(\frac{2k+1}{2}i\right).\end{align} Now the $N\to\infty$ limit is very easy to take, as all the Bessel functions involved have zeros at $0$. So the term does not contribute (the total mass is of order $T^2\nu(N)$, canceling the $\nu(N)$ out in front). With some care we arrive at Theorem \ref{maintheoremone}.

If instead we take $N=1$ and $w_T = H_T$, then, by standard bounds on Bessel functions, $J_{2k+1}(\frac{4\pi\sqrt{m}}{c})$ is very small for $k$ larger than $\asymp\frac{\sqrt{m}}{c}$. For us $\sqrt{m}$ will always be bounded in size by something that is $\asymp T^\eta$. Thus for $k$ smaller than this range, the Bessel term is still controlled but not too small. It is the term \begin{align}w_T\left(\frac{2k+1}{2}i\right)\ \asymp \  H\left(\frac{2k+1}{2T}i\right)\ \ll \  \left(\frac{k}{T}\right)^K\end{align} that is small. Upon taking \begin{align}K\ \gg \  \frac{1}{1-\eta}\end{align} it is in fact small enough to bound trivially. For slightly larger support we instead appeal to the bounds of \cite{ILS}
on sums of Kloosterman sums, which are derived from assuming GRH for Dirichlet $L$-functions. This gives Theorem \ref{maintheoremthree}. In fact, the expression we get is exactly a weighted sum of terms appearing in \cite{ILS} from the Kloosterman terms of Petersson formulas. It would be interesting to find a conceptual explanation for this.

The proof of Theorem \ref{maintheoremtwo} is a simplified version of the argument given in \cite{AM}, except considerably shortened --- instead of delicate analysis of exponential sums, we just use Euler-Maclaurin summation. As one would expect our support is thus smaller than that in \cite{AM}, but the argument and main ideas are significantly easier to see.

The proof of Theorem \ref{maintheoremfour} follows from the previous results and another application of the Kuznetsov formula, this time to the inner product of $T_{p^\ell}$ with $T_{q^{\ell'}}$ with $p,q$ primes.

\section{Preliminaries for the Proofs}

In this section we compute and analyze some expansions and resulting expressions that are useful in the proofs of our main theorems. We start in \S\ref{sec:calcaveonelevel} by using the explicit formula to relate the sum over zeros to sums over the Hecke eigenvalues of the associated cusp forms. The weights and normalizations are chosen to facilitate applying the Kuznetsov trace formula to these sums, which we do. After trivially handling several of the resulting terms, in \S\ref{sec:handlingbessel} we analyze the Bessel function integral that arises. We then use these results in \S\ref{sec:proofsofmaintheorems} to prove the stated theorems.

\subsection{Calculating the averaged one-level density}\label{sec:calcaveonelevel}

We first quickly review the computation of the explicit formula; see \cite{ILS, RS} for details. Let $u\in \mathcal{B}_N$, and for an even Schwartz test function $\phi$ set \begin{align}\Phi(s)\ :=\ \phi\left(\frac{\left(s-\frac{1}{2}\right)\log{R}}{2\pi i}\right).\end{align} Consider \begin{align}\int_{\sigma = \frac{3}{2}} \Phi(s)\frac{\Lambda'}{\Lambda}(s,u) ds.\end{align} By moving the integration to $\sigma = -\frac{1}{2}$ and applying the functional equation, we find that \begin{align}2\int_{\sigma = \frac{3}{2}} \frac{\Lambda'}{\Lambda}(s,u)\Phi(s) ds\ = \ D_1(u,\phi)\end{align} (use the rapid decay of $\phi$ along horizontal lines and Phragmen-Lindel\"of to justify the shift). After expanding the logarithmic derivative out in the usual way, applying the Kim-Sarnak bound, and noticing that $\lambda_{p^2} = \lambda_p^2 - \chi_0(p)$ for $\chi_0$ the principal character modulo $N$, this equality simplifies to
\begin{align}
D_1(u,\phi) &\ = \  \frac{\phi(0)}{2} + \widehat{\phi}(0)\left(\frac{\log{N} + \log(1+t_u^2)}{\log{R}}\right) + O\left(\frac{\log\log{R} + \log\log{N}}{\log{R}}\right) \nonumber\\&\quad \ \ +\ 2\sum_{\ell=1}^2\sum_p \frac{\lambda_{p^\ell}\log{p}}{p^{\frac{\ell}{2}}\log{R}}\widehat{\phi}\left(\frac{\ell\log{p}}{\log{R}}\right).
\end{align}

Thus, if $w_T$ is essentially supported on $\asymp T$ (as are $h_T$ and $H_T$), the averaged one-level density is (since $||u||\asymp 1$ under our normalizations, by \cite{Smi})
\begin{align}\label{eq:RforcedtobeTTN}
{D}_1(\mathcal{B}_N,\phi,R;w_T) &\ = \  \frac{\phi(0)}{2} + \widehat{\phi}(0)\left(\frac{\log(T^2N)}{\log{R}}\right) + O\left(\frac{\log\log{R} + \log\log{N}}{\log{R}}\right) \nonumber\\&\quad+ 2\sum_{\ell=1}^2\sum_p \frac{\log{p}}{p^{\frac{\ell}{2}}\log{R}}\widehat{\phi}\left(\frac{\ell\log{p}}{\log{R}}\right)\Avg(\lambda_{p^\ell};w_T).
\end{align} Notice the above computation tells us the correct scaling to use is $R\asymp T^2N$.

The difficulty is in determining the averages over Hecke eigenvalues. For this we use the Kuznetsov formula for $\mathcal{B}_N$. Let $w_T$ equal $h_T$ or $H_T$.

\begin{thm}[Kuznetsov trace formula (see \cite{KL}, page 86)]\label{thm:kuznetsovtraceformula} Let $m\in \Z^+$. Then
\begin{align}\label{KTF}
\sum_{u\in \mathcal{B}_N} \frac{\lambda_m(u)}{||u||^2}w_T(t_u) &\ = \  \frac{\delta_{m,1} \nu(N)}{\pi^2}\int_\R rw_T(t)\tanh(\pi r)dr \nonumber\\&\quad- \frac{1}{\pi}\sum_{(i_p)_{p\vert N}\in \{0,1\}^{\omega(N)}}\int_\R \frac{\tilde{\sigma}_{ir}(m,(i_p))\overline{\tilde{\sigma}_{ir}(1,(i_p))} m^{ir} w_T(r)}{||(i_p)||^2 |\zeta(1 + 2ir)|^2} dr \nonumber\\&\quad+ \frac{2i}{\pi}\frac{\nu(N)}{N}\sum_{c\geq 1} \frac{S(m,1;Nc)}{c}\int_\R J_{2ir}\left(\frac{4\pi\sqrt{m}}{Nc}\right) \frac{rw_T(r)}{\cosh(\pi r)}dr,
\end{align}
where $S$ is the usual Kloosterman sum, $(i_p)_{p\vert N}$ runs through all $0\leq i_p\leq 1$ with $p$ ranging over the prime factors of $N$, \begin{eqnarray}& & \tilde{\sigma}_{ir}(a,(i_p)) \ := \ \nonumber\\ & & \left(\prod_{p\vert N} p^{i_p}\right)^{-1-2ir}\sum_{d\vert a} \frac{\chi_0(d\bmod{\prod_{p\vert N} p^{1-i_p}})}{d^{2ir}}\sum_{f\in \left(\Z/\prod_{p\vert N} p^{i_p}\Z\right)^\times} e\left(\frac{af}{d\prod_{p\vert N} p^{i_p}}\right),\ \ \ \ \ \end{eqnarray} $J$ is the usual Bessel function, and \begin{align}||(i_p)||^2\ = \ \prod_{p\vert N} \frac{p^{1-i_p}}{1+p}\ = \ \frac{N}{\nu(N) \prod_{p\vert N} p^{i_p}}.\end{align}
\end{thm}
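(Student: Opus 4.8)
The statement is the classical Kuznetsov trace formula for $\Gamma_0(N)$, specialized to the spectral weight $w_T$ and the normalizations of the excerpt, so the plan is to reconstruct the standard Poincar\'e-series derivation and check that it reproduces exactly the three listed terms. For a radial test function $\psi$ on $(0,\infty)$ I would introduce the level-$N$ Poincar\'e series $P_m(z) = \sum_{\gamma\in\Gamma_\infty\backslash\Gamma_0(N)} \psi(\mathrm{Im}\,\gamma z)\, e(m\,\mathrm{Re}\,\gamma z)$, absolutely convergent once $\psi$ decays suitably at $0$ and $\infty$. The whole identity comes from computing the first ($n=1$) Fourier coefficient of $P_m$ --- equivalently its inner product against a second Poincar\'e series --- in two ways and equating them.

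\emph{Geometric side.} Unfolding $P_m$ against $\Gamma_\infty$ and using the Bruhat decomposition of $\Gamma_0(N)$ relative to $\Gamma_\infty$, the double cosets split into the trivial one (lower-left entry $c=0$), contributing the diagonal $\delta_{m,1}$ term, and the nontrivial ones indexed by $c\equiv 0 \pmod N$, i.e.\ $c=Nc'$. The resulting $x$-integral collapses the exponential sum over upper-left entries into the Kloosterman sum $S(m,1;Nc')$, while the $y$-integral of $\psi$ becomes a $J$-Bessel transform. This produces precisely the term $\frac{2i}{\pi}\frac{\nu(N)}{N}\sum_{c}\frac{S(m,1;Nc)}{c}\int_\R J_{2ir}(\frac{4\pi\sqrt m}{Nc})\frac{r w_T(r)}{\cosh(\pi r)}\,dr$, with $w_T$ the spectral transform of $\psi$.

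\emph{Spectral side.} Expanding $P_m$ in the spectral decomposition of $L^2(\Gamma_0(N)\backslash\hfrak)$ --- the cuspidal basis $\mathcal{B}_N$, the Eisenstein continuous spectrum, and the residual/constant spectrum --- and extracting the first Fourier coefficient, the cuspidal part yields $\sum_{u\in\mathcal{B}_N}\lambda_m(u)w_T(t_u)/\|u\|^2$ together with the Plancherel factor $r\tanh(\pi r)$, while the residual/constant part drops out, having no nonzero Fourier coefficient. The Eisenstein part gives the middle term: for square-free $N$ the cusps are parametrized by subsets of the prime divisors of $N$, which is the source of the sum over $(i_p)_{p\mid N}\in\{0,1\}^{\omega(N)}$, and the Fourier coefficients of the associated Eisenstein series supply the divisor-type sums $\tilde\sigma_{ir}$ together with the scattering denominator $|\zeta(1+2ir)|^2$. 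Converting between the geometric test function $\psi$ and the spectral weight $w_T$ is accomplished by the Kontorovich--Lebedev (Bessel) inversion; one checks that the admissible class consists of even functions holomorphic and of controlled decay in a horizontal strip, a condition $h_T$ and $H_T$ satisfy by Paley--Wiener since $\widehat h$ and $\widehat H$ are smooth and compactly supported in $(-\tfrac14,\tfrac14)$.

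The main obstacle is analytic rather than formal: the Poincar\'e series sits only conditionally in the relevant space near the edge of convergence, so one must justify the analytic continuation in the $s$-parameter, the termwise extraction of Fourier coefficients from the spectral expansion, and the interchange of the spectral sum and integral with the geometric unfolding. Equally delicate is the exact evaluation of the Bessel integral on the geometric side and the careful bookkeeping of the Eisenstein contributions at the several cusps of $\Gamma_0(N)$, where the $(i_p)$-parametrization and the normalization $\|(i_p)\|^2 = N/(\nu(N)\prod_{p\mid N}p^{i_p})$ must be tracked precisely to reproduce the stated constants. As this is a known result, in practice I would run the derivation in the normalization of \cite{KL} and verify that their page-86 formula specializes to the displayed identity under our conventions for $\lambda_m(u)$, $\|u\|^2$, and the index factor $\nu(N)$.
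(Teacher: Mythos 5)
The paper offers no internal proof of this theorem: it is quoted (with the paper's normalizations) directly from Knightly--Li \cite{KL}, page 86, so the only ``proof'' to compare against is that citation. Your outline is the standard Poincar\'e-series derivation --- unfolding/Bruhat decomposition on the geometric side, spectral decomposition with the cusps of $\Gamma_0(N)$ indexed by $(i_p)\in\{0,1\}^{\omega(N)}$ on the spectral side, and Bessel--Kontorovich--Lebedev inversion linking $\psi$ to $w_T$ --- which is essentially the route taken in \cite{KL}, and your closing step of verifying the specialization under the conventions for $\lambda_m(u)$, $\|u\|^2$, and $\nu(N)$ is exactly what the authors implicitly rely on.
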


In our applications we always have $(m,N) = 1$ since we only take $m = 1, p$, or $p^2$, which means that the contribution from the principal character in the definition of $\tilde{\sigma}_{ir}$ may be ignored. Also the inner sum in $\tilde{\sigma}_{ir}(a,(i_p))$ is of the form \begin{align}\sum_{\xi\in (\Z/n\Z)^\times} e\left(\frac{\xi}{n}\right)\ = \ \mu(n)\ \ll \  1.\end{align} Hence, bounding trivially and noting that our $m$ have at most three divisors, we find \begin{align}\tilde{\sigma}_{ir}(a,(i_p))\ \ll \  \left(\prod_{p\vert N} p^{i_p}\right)^{-1}.\end{align} Also, by work of de la Vallee Poussin on the prime number theorem, $\zeta(1+2ir)\gg \log(2+|r|)^{-1}$. Hence the second term in \eqref{KTF}, the Eisenstein contribution, is
\begin{align}
&\ \ll \  \frac{\nu(N)}{N}\sum_{(i_p)_{p\vert N}} \frac{1}{\prod_{p\vert N} p^{i_p}}\int_\R w_T(r)\log(2+|r|)dr\nonumber\\&\ \ll \  \frac{\nu(N) T\log{T}}{N}\prod_{p\vert N} 1 + \frac{1}{p} \nonumber\\&\ = \  \left(\frac{\nu(N)}{N}\right)^2 T\log{T}.
\end{align}

In our applications we will always divide these expressions by the corresponding expression with $m=1$, which gives the total mass of the family (``the denominator'' in the sequel). We will see that it is of order $\asymp T^2\nu(N)$ (see Corollary \ref{cor:denominatorsize}). Hence, since it will be divided by something of order $\asymp T^2\nu(N)$, the Eisenstein contribution is thus negligible for $N$ or $T$ large.

Note that the diagonal term (that is, the first term of \eqref{KTF}, with $m=1$) is \begin{align}\nu(N)\int_\R rw_T(r)\tanh(\pi r)dr\ \asymp \  T^2\nu(N).\end{align} Hence to show the claim about the total mass it suffices to bound the last term of \eqref{KTF} in the case of $m=1$.

We have therefore reduced the computation of the weighted 1-level density to understanding the ``Bessel-Kloosterman'' terms. We isolate this result below.

\begin{lem}\label{lem:kuznetsovreduction}
If $m = 1, p$ or $p^2$ is coprime to $N$ and $w_T$ equals $h_T$ or $H_T$, then
\begin{align}
\sum_{u\in \mathcal{B}_N} \frac{\lambda_m(u)}{||u||^2}w_T(t_u) &\ = \  \delta_{m,1}\cdot \left(\asymp T^2 N\right) \nonumber\\&\quad+ O\left(\left(\frac{\nu(N)}{N}\right)^2 T\log{T}\right) \nonumber\\&\quad+ \frac{2i}{\pi}\frac{\nu(N)}{N}\sum_{c\geq 1} \frac{S(m,1;Nc)}{c}\int_\R J_{2ir}\left(\frac{4\pi\sqrt{m}}{Nc}\right) \frac{rw_T(r)}{\cosh(\pi r)}dr,
\end{align}
where $\delta_{m,1}\cdot \left(\asymp T^2 N\right)$ is the product of a term on the order of $T^2N$ with Kronecker's delta.
\end{lem}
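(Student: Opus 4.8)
The plan is to read the three terms off directly from the Kuznetsov trace formula (Theorem~\ref{thm:kuznetsovtraceformula}), applied to our $m\in\{1,p,p^2\}$ with $(m,N)=1$: evaluate the first (diagonal) term, bound the second (Eisenstein) term, and leave the third (Bessel--Kloosterman) term untouched, since it is already verbatim the expression appearing in the statement. In this sense the lemma is a bookkeeping step that isolates the one term resisting elementary estimation; all of the genuine difficulty is deferred to its analysis in the later sections.

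First I would treat the diagonal term $\frac{\delta_{m,1}\nu(N)}{\pi^2}\int_\R r\,w_T(r)\tanh(\pi r)\,dr$. For $m=p$ or $p^2$ it vanishes since $\delta_{m,1}=0$. For $m=1$ I would rescale $r=Ts$ and use that $\tanh(\pi Ts)\to\sgn(s)$ pointwise as $T\to\infty$, together with dominated convergence and the nonnegativity of both $H$ and $h(i\,\cdot)$ (hence of $w_T$), to conclude that the integral is asymptotic to a positive multiple of $T^2$. Thus for $m=1$ the first term is $\asymp T^2\nu(N)$, furnishing exactly the diagonal contribution $\delta_{m,1}\cdot(\asymp T^2 N)$ claimed in the statement, as already recorded in the discussion preceding the lemma.

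Second, for the Eisenstein term I would invoke the estimate carried out just above the lemma. One uses the trivial bound $\tilde{\sigma}_{ir}(m,(i_p))\ll(\prod_{p\mid N}p^{i_p})^{-1}$ --- valid because $(m,N)=1$ lets one drop the principal character, the inner exponential sum is a Ramanujan sum bounded by $1$, and $m$ has at most three divisors --- together with the classical lower bound $\zeta(1+2ir)\gg\log(2+|r|)^{-1}$ and the explicit value $||(i_p)||^2=N/(\nu(N)\prod_{p\mid N}p^{i_p})$. Summing $\sum_{(i_p)}\prod_{p\mid N}p^{-i_p}=\prod_{p\mid N}(1+1/p)=\nu(N)/N$, and bounding $\int_\R w_T(r)\log(2+|r|)\,dr\ll T\log T$ (from the essential support of $w_T$ on a band of length $\asymp T$, via the Schwartz decay of $H$ and the bound $h_T(r)\ll\exp(-\pi|r|/(4T))$), then yields the stated $O\left((\nu(N)/N)^2\,T\log T\right)$.

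Assembling these three pieces reproduces exactly the right-hand side of the lemma. The only step requiring any care is the uniform control of the Eisenstein contribution across the $2^{\omega(N)}$ terms indexed by $(i_p)$; but since each is handled by the same trivial bound and the resulting divisor sum collapses cleanly into $\nu(N)/N$, no real obstacle arises. The substantive difficulty --- the oscillatory Bessel integral weighted against $S(m,1;Nc)/c$ --- is precisely what this lemma sets aside for the contour-shifting analysis of the subsequent sections, and it is there, rather than here, that the main work lies.
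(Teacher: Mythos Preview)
Your proposal is correct and follows essentially the same approach as the paper: the lemma is not given a separate proof there but is explicitly ``isolated'' from the discussion immediately preceding it, which consists of exactly the three steps you outline (evaluate the diagonal term as $\asymp T^2\nu(N)$, bound the Eisenstein term by $(\nu(N)/N)^2 T\log T$ via the same pointwise estimates on $\tilde\sigma_{ir}$ and $\zeta(1+2ir)^{-1}$, and leave the Bessel--Kloosterman term intact). Your added detail on the diagonal term --- the rescaling $r=Ts$ and dominated convergence against $\tanh(\pi Ts)\to\sgn(s)$ --- is a reasonable fleshing-out of what the paper asserts in one line.
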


\subsection{Handling the Bessel integral}\label{sec:handlingbessel}

As in \cite{AM}, the technical heart of the analysis of the Kuznetsov formula is the following claim, which relies on the analytic properties of $h_T$ and $H_T$; see \S\ref{sec:appendixAcontourintegration} (Appendix I) for a proof.

\begin{prop}\label{prop:usefulexpansionbesselterm} Let $T$ be an odd integer. Let $X\leq T$. Let $w_T$ equal $h_T$ or $H_T$, where these are the weight functions from Theorems \ref{maintheoremone} through \ref{maintheoremfour}. Then \begin{align}
\int_\R J_{2ir}(X)\frac{rw_T(r)}{\cosh(\pi r)}dr &\ = \  c_1\sum_{k\geq 0} (-1)^k J_{2k+1}(X) (2k+1) w_T\left(\left(k+\frac{1}{2}\right)i\right) \nonumber\\&\quad\quad\quad\left[+ c_2 T^2 \sum_{k\geq 1} (-1)^k J_{2kT}(X) k^2 h(k)\right] \\&\ = \  c_1\sum_{k\geq 0} (-1)^k J_{2k+1}(X) (2k+1) w_T\left(\left(k+\frac{1}{2}\right)i\right) \nonumber\\&\quad\quad\quad\left[+ O\left(Xe^{-c_3 T}\right)\right],
\end{align}
where $c_1$, $c_2$, and $c_3$ are constants independent of $X$ and $T$, and the terms in brackets are included if and only if $w_T = h_T$.
\end{prop}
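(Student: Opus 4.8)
The plan is to read the left-hand side as the contour integral over $\R$ of the meromorphic function $F(r) := J_{2ir}(X)\,\frac{r\,w_T(r)}{\cosh(\pi r)}$ and to shift the contour down to $\R - iY$, collecting residues and letting $Y\to\infty$. The first step is to locate the poles of $F$ in the lower half-plane. Since $J_\nu(X)$ is entire in the order $\nu$ (and $X>0$), the factor $J_{2ir}(X)$ is entire in $r$, so all poles come from the other factors. The factor $1/\cosh(\pi r)$ has simple poles at $r=-i\left(k+\frac12\right)$ for $k\ge 0$; and, only in the case $w_T=h_T$, the factor $1/\sinh(\pi r/T)$ inside $h_T$ contributes simple poles at $r=-ikT$ for $k\ge 1$ (the prospective pole at the origin is killed by the zeros of $r$ and of $h$ there). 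When $w_T=H_T$ the weight is entire by Paley--Wiener, as $\widehat H$ has compact support, so only the first family of poles occurs and no bracketed term is produced.

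By the residue theorem applied to a tall rectangle with horizontal sides on $\R$ and $\R-iY$, one gets $\int_\R F = \int_{\R-iY}F - 2\pi i\sum\mathrm{Res}$, the vertical sides vanishing as their abscissa tends to $\pm\infty$ because of the rapid decay of $1/\cosh(\pi r)$ in the real direction. At a pole $r=-i\left(k+\frac12\right)$ one has $2ir=2k+1$, so $J_{2ir}(X)=J_{2k+1}(X)$; evaluating the residue of $1/\cosh$ and using that $w_T$ is even (to turn $w_T(-i(k+\frac12))$ into $w_T((k+\frac12)i)$) reproduces exactly the main sum $c_1\sum_{k\ge0}(-1)^k J_{2k+1}(X)(2k+1)w_T((k+\frac12)i)$. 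At a pole $r=-ikT$ one has $2ir=2kT$, hence $J_{2ir}(X)=J_{2kT}(X)$; here the hypothesis that $T$ is odd is used to evaluate $\cosh(\pi r)=\cos(kT\pi)$ cleanly, and the residue yields the bracketed sum $c_2T^2\sum_{k\ge1}(-1)^kJ_{2kT}(X)k^2h(k)$. The precise values of $c_1,c_2$ fall out of these residue computations and are immaterial for us.

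The step I expect to be the real obstacle is showing that $\int_{\R-iY}F\to 0$ as $Y\to\infty$ along a sequence bounded away from the poles. On the shifted line $r=t-iY$ the Bessel order is $2ir=2Y+2it$, with large positive real part, and for fixed argument $X$ the Bessel function decays super-exponentially in that real part, roughly like $(X/2)^{2Y}/|\Gamma(2Y+1+2it)|$. One must show that this decay beats the at-most-exponential growth of the weight on this line --- $H_T(t-iY)\ll e^{\pi Y/(2T)}$ from the Paley--Wiener bound, with the analogous control for $h_T$ away from its poles --- together with the behaviour of $r/\cosh(\pi r)$, and one needs enough uniformity in $t$ for the $t$-integral to converge to $0$. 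This is precisely the analytic heart of Appendix~I, where the smoothness and support properties built into $h_T$ and $H_T$ are consumed.

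Finally, to pass from the exact residue expression to the second displayed estimate I would bound the bracketed sum directly. Since $X\le T$, every Bessel function there has order $2kT\ge 2T$ far larger than its argument; using $|J_n(X)|\le (X/2)^n/n!$ and Stirling, the $k=1$ term is already $\ll (X/2)^{2T}/(2T)!$, which is $\ll Xe^{-c_3T}$, and the remaining terms decay geometrically and are absorbed into the same bound. This yields the claimed $O(Xe^{-c_3T})$ and completes the reduction.
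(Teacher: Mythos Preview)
Your proposal is correct in its overall architecture --- contour shift to the lower half-plane, residues at the zeros of $\cosh(\pi r)$ and (for $h_T$) at the poles of $1/\sinh(\pi r/T)$, then bound the bracketed sum via $J_n(X)\ll (X/2)^n/n!$ --- and this is exactly the paper's strategy. The one technical divergence is in how the vanishing of the shifted integral is secured. You propose to shift the contour with the full Bessel function $J_{2ir}(X)$ in place and then argue that its superexponential decay in $Y$ beats the weight's growth; you correctly flag this as the hard step. The paper sidesteps precisely this difficulty by first expanding $J_{2ir}(X)$ as its defining power series $\sum_{m\ge 0}\frac{(-1)^m x^{2m+2ir}}{m!\,\Gamma(m+1+2ir)}$, switching sum and integral by dominated convergence, and only then shifting the contour \emph{term by term}. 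The individual integrand $\frac{x^{2ir} r\,w_T(r)}{\Gamma(m+1+2ir)\cosh(\pi r)}$ is far easier to control on $\R-iR$: a direct Stirling bound on the Gamma factor and the Paley--Wiener bound on $w_T$ give $\ll_m x^{2R}e^{\pi R/(2T)}R^{-m-1-2R}\to 0$ with no further subtlety. After collecting residues one resums in $m$ to reconstitute the $J_{2k+1}$ and $J_{2kT}$. Your direct route should also work, but carrying it out requires uniform bounds on $J_{2Y+2it}(X)$ over all real $t$, which is exactly the analysis the paper's series expansion is designed to avoid.
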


Since Bessel functions of integer order are much better-studied objects than those of purely imaginary order, this is a useful reduction. The calculation also realizes the Kloosterman term in the Kuznetsov formula as a sort of average (though the ``weight function'' is growing exponentially in the case of $H_T$) of Kloosterman terms arising in Petersson formulas over all even weights.

For what follows we quickly note the following corollary, which determines the size of the denominator (total mass) mentioned above.

\begin{cor}\label{cor:denominatorsize} Let $w_T$ equal $h_T$ or $H_T$ (as above). Then
\begin{align}\sum_{u\in \mathcal{B}_N} \frac{w_T(t_u)}{||u||^2}\ \asymp \  T^2\nu(N).\end{align}
\end{cor}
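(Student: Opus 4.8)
The goal is to show $\sum_{u\in \mathcal{B}_N} w_T(t_u)/\|u\|^2 \asymp T^2\nu(N)$, which is exactly the $m=1$ case of the Kuznetsov trace formula as packaged in Lemma~\ref{lem:kuznetsovreduction}. The plan is to read off the three contributions on the right-hand side of that lemma and show the diagonal term dominates. With $m=1$ the diagonal (first) term is $\nu(N)\int_\R rw_T(r)\tanh(\pi r)dr \asymp T^2\nu(N)$, as already observed in the text; the Eisenstein (second) term is $O((\nu(N)/N)^2 T\log T)$; so everything reduces to bounding the Bessel-Kloosterman (third) term and checking it is $o(T^2\nu(N))$.

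\medskip

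First I would apply Proposition~\ref{prop:usefulexpansionbesselterm} with $X = 4\pi\sqrt{m}/(Nc) = 4\pi/(Nc) \le 4\pi$ (taking $N,c\ge 1$) to rewrite the inner Bessel integral as $c_1\sum_{k\ge 0}(-1)^k J_{2k+1}(X)(2k+1)w_T((k+\tfrac12)i)$, plus the bracketed error $O(Xe^{-c_3 T})$ in the $h_T$ case. The key point is that every Bessel function $J_{2k+1}(X)$ has a zero at $X=0$; more precisely, for fixed order $2k+1$ the standard bound $J_{2k+1}(X)\ll_k X^{2k+1}$ for small $X$ gives $J_{2k+1}(4\pi/(Nc)) \ll (1/(Nc))^{2k+1}$, so the leading $k=0$ term already carries a factor $X \ll 1/(Nc)$. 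Combined with $|w_T((k+\tfrac12)i)|$, which decays in $k$ for both weights (for $H_T$ by the order-$K$ zero, giving $\ll (k/T)^K$, and for $h_T$ by the explicit hyperbolic formula), the sum over $k$ converges and is $\ll 1/(Nc)$.

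\medskip

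Next I would sum over $c$: the prefactor is $(\nu(N)/N)\sum_{c\ge 1} S(1,1;Nc)/c$ times the bounded Bessel sum above. Using the Weil bound $S(1,1;Nc)\ll (Nc)^{1/2+\epsilon}$ together with the $1/(Nc)$ decay from the Bessel factor makes the $c$-sum converge absolutely, yielding a total Bessel-Kloosterman contribution of size $\ll (\nu(N)/N)\cdot N^{-1/2+\epsilon}$ (up to $T$-dependence absorbed into the bounded $w_T$-sums, which contribute at most a polynomial-in-$T$ factor far smaller than $T^2$). In any case this is easily $o(T^2\nu(N))$ as $N$ or $T$ grows. The Eisenstein term $(\nu(N)/N)^2 T\log T$ is likewise $o(T^2\nu(N))$ since $\nu(N)/N \le 2$ and $T\log T = o(T^2)$. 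Hence the diagonal term dominates and the claimed asymptotic $\asymp T^2\nu(N)$ follows.

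\medskip

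The main obstacle is making the convergence of the combined $k$- and $c$-sums genuinely uniform and clean, rather than term-by-term. The subtlety is that in the $H_T$ case the effective ``weight'' $w_T((k+\tfrac12)i)$ is controlled by $H((k+\tfrac12)i/T)$, which the excerpt notes can grow like $\exp(\pi(2k+1)/(4T))$ from the trivial Fourier bound before the order-$K$ zero is used; so I must verify that the polynomial decay $(k/T)^K$ from the $K$ zeros, against the exponential-in-$k$ suppression $J_{2k+1}(X)\ll X^{2k+1}$ from the small argument $X\le 4\pi/(Nc)$, comfortably beats any such growth and secures absolute convergence uniformly in $c$. Since $X$ is genuinely bounded (indeed small for $Nc$ large), the factorial growth of $1/(2k+1)!$ implicit in $J_{2k+1}(X)\ll X^{2k+1}/(2k+1)!$ dominates everything, so this is a routine but necessary estimate; I would carry it out once and quote it, as the same Bessel expansion is reused in the proofs of the main theorems.
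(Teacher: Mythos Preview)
Your proposal is correct and follows essentially the same route as the paper: apply Lemma~\ref{lem:kuznetsovreduction} with $m=1$, observe the diagonal gives $\asymp T^2\nu(N)$ and the Eisenstein term is negligible, then control the Bessel--Kloosterman term by invoking Proposition~\ref{prop:usefulexpansionbesselterm} and bounding each $J_{2k+1}(X)$ by $(X/2)^{2k+1}/(2k+1)!$, so that the factorial decay in $k$ swamps any growth of $w_T((k+\tfrac12)i)$ and the resulting $1/(Nc)$ decay (with the Weil bound) makes the $c$-sum converge. The paper's own proof is the same argument, stated more tersely (it just says ``bound trivially'' and records the auxiliary estimate $\sin(\pi(2k+1)/(2T))\gg T^{-1}$ needed for $w_T=h_T$); your final paragraph about uniform convergence in the $H_T$ case is exactly the routine check the paper leaves implicit.
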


\begin{proof}
It suffices to show that \begin{align}\frac{\nu(N)}{N}\sum_{c\geq 1} \frac{S(1,1;c)}{c}\sum_{k\geq 0} (-1)^k J_{2k+1}\left(\frac{4\pi}{cN}\right) (2k+1) w_T\left(\left(k+\frac{1}{2}\right)i\right)\ \ll \  \frac{\nu(N) T}{N}.\end{align} To see this, we bound trivially by using \begin{align}J_k(x)\ \ll \  \frac{(x/2)^k}{k!},\end{align} and \begin{align}\sin\left(\frac{2k+1}{2T}\pi\right)\ \gg \  T^{-1}\end{align} in the case of $w_T = h_T$.
\end{proof}

\section{Proofs of the Main Theorems}\label{sec:proofsofmaintheorems}

Using the results from the previous section, we can now prove our main theorems. All arguments begin with the following reductions. We first use \eqref{eq:RforcedtobeTTN} to reduce the determination of the 1-level density to that of sums of the weighted averages of $\lambda_p$ and $\lambda_{p^2}$. We then use the Kuznetsov trace formula (Theorem \ref{thm:kuznetsovtraceformula}) to analyze these sums. By Lemma \ref{lem:kuznetsovreduction} we are reduced to bounding the contribution from the Bessel-Kloosterman term, to which we apply Proposition \ref{prop:usefulexpansionbesselterm} to analyze these exponential sums. We now turn to the details of each of these cases.

\subsection{Proof of Theorem \ref{maintheoremone}}

It suffices to study \begin{eqnarray}\mathcal{S} &\ := \  & \sum_{\ell=1}^2\sum_p \frac{2\log{p}}{p^{\frac{\ell}{2}}\log{R}}\widehat{\phi}\left(\frac{\ell\log{p}}{\log{R}}\right)\frac{\nu(N)}{N}\sum_{c\geq 1} \frac{S(p^\ell,1;c)}{c} \nonumber\\ & & \ \ \ \ \  \cdot \ \sum_{k\geq 0} (-1)^k J_{2k+1}\left(\frac{4\pi p^{\frac{\ell}{2}}}{cN}\right) (2k+1)w_T\left(\frac{2k+1}{2}i\right),\end{eqnarray} and bound $\mathcal{S}$ by something growing strictly slower than $\nu(N)$. This is because we get to divide this term by the total mass, which by Corollary \ref{cor:denominatorsize} is of the order $T^2\nu(N)$. As $T$ is fixed, we are dividing by a quantity on the order of $\nu(N)$.

Bounding trivially, we find
\begin{align}
\mathcal{S} &\ \ll \  \frac{\nu(N)}{N}\sum_{c\geq 1} \frac{1}{c}\sum_{k\geq 0} \frac{(2\pi)^{2k+1}w_T\left(\frac{2k+1}{2}i\right)}{(2k)!}\sum_{\ell=1}^2\sum_{p^{\ell/2}\leq R^{\eta/2}} \frac{2\log{p}}{\log{R}} (cN)^{\frac{1}{2}+\eps-2k-1} p^{k\ell}
\nonumber\\&\ \ll \  \frac{\nu(N)}{N^{\frac{3}{2}-\eps}}\sum_{k\geq 0} \frac{(2\pi)^{2k}}{(2k)!}w_T\left(\frac{2k+1}{2}i\right)\frac{R^{\eta(k+1)}}{N^{2k}\log{R}}
\nonumber\\&\ \ll \  \nu(N)\frac{N^{\eta - \frac{3}{2}+\eps} T^{2\eta}}{\log{N} + \log{T}} e^{O_T(T^{2\eta} N^{\eta-2})}.
\end{align} As $T$ is fixed, the above is negligible for $\eta < 3/2$, which completes the proof. \hfill $\Box$

\subsection{Proof of Theorem \ref{maintheoremtwo}}

It suffices to study \begin{eqnarray}\mathcal{S} & \ := \ & \sum_{\ell=1}^2\sum_p \frac{\log{p}}{p^{\frac{\ell}{2}}\log{T}}\widehat{\phi}\left(\frac{\ell\log{p}}{2\log{T}}\right)\sum_{c\geq 1} \frac{S(p^\ell,1;c)}{c}\nonumber\\ & & \ \ \ \ \ \ \cdot \ \sum_{k\geq 0} (-1)^k J_{2k+1}\left(\frac{4\pi p^{\frac{\ell}{2}}}{c}\right) (2k+1)h_T\left(\frac{2k+1}{2}i\right)\end{eqnarray} and bound $\mathcal{S}$ by something growing strictly slower than $T^2$. This is because $N$ is fixed, so by Corollary \ref{cor:denominatorsize} the denominator that occurs in the weighted averages is on the order of $T^2$.

In \cite{AM} (see their (3.8) to (3.18) --- for the convenience of the reader, this argument is reproduced in \S\ref{sec:appendixmanipulationfromAM} (Appendix II), it is proved that
\begin{align}
&\sum_{k\geq 0} (-1)^k J_{2k+1}\left(\frac{4\pi p^{\frac{\ell}{2}}}{c}\right) (2k+1)h_T\left(\frac{2k+1}{2}i\right) \nonumber\\&\quad\quad\quad= c_8T\sum_{|\alpha| < \frac{T}{2}} e\left(Y\sin\left(\frac{\pi\alpha}{T}\right)\right)\tilde{\tilde{h}}\left(\frac{\pi Y}{T}\cos\left(\frac{\pi\alpha}{T}\right)\right) + O(Y)\nonumber\\&\quad\quad\quad\ =: \  c_8TS_h(Y) + O(Y),
\end{align}
where $\tilde{\tilde{h}}(x):=x^2h(x)$ and $Y:=2p^{\ell/2}/c$. We apply the Euler-Maclaurin summation formula to the first term, yielding
\begin{align}\label{euler maclaurin}
S_h(Y) &\ = \  \int_{-\frac{T}{2}}^{\frac{T}{2}} e\left(Y\sin\left(\frac{\pi\alpha}{T}\right)\right)\tilde{\tilde{h}}\left(\frac{\pi Y}{T}\cos\left(\frac{\pi\alpha}{T}\right)\right) d\alpha \nonumber\\&\quad\quad+ \sum_{k=2}^M \frac{B_k}{k!}\left(e\left(Y\sin\left(\frac{\pi\alpha}{T}\right)\right)\tilde{\tilde{h}}\left(\frac{\pi Y}{T}\cos\left(\frac{\pi\alpha}{T}\right)\right)\right)^{(k)}\Bigg\vert_{-\frac{T}{2}}^{\frac{T}{2}} \nonumber\\&\quad\quad\quad\quad+ O\left(\int_{-\frac{T}{2}}^{\frac{T}{2}} \left|\left(e\left(Y\sin\left(\frac{\pi\alpha}{T}\right)\right)\tilde{\tilde{h}}\left(\frac{\pi Y}{T}\cos\left(\frac{\pi\alpha}{T}\right)\right)\right)^{(M)}\right| d\alpha\right).
\end{align}

In differentiating the expression \begin{align}e\left(Y\sin\left(\frac{\pi\alpha}{T}\right)\right)\tilde{\tilde{h}}\left(\frac{\pi Y}{T}\cos\left(\frac{\pi\alpha}{T}\right)\right)\end{align} $k$ times, the worst case is when we differentiate the exponential every single time and pick up a factor of $\left(\frac{Y}{T}\right)^k$; otherwise we gain at least one factor of $T$ (remember that $Y$ should be thought of as order $T^\eta$). Hence we may bound the error term by \begin{align}\int_{-\frac{T}{2}}^{\frac{T}{2}} \left|\left(e\left(Y\sin\left(\frac{\pi\alpha}{T}\right)\right)\tilde{\tilde{h}}\left(\frac{\pi Y}{T}\cos\left(\frac{\pi\alpha}{T}\right)\right)\right)^{(M)}\right| d\alpha\ \ll \  \left(\frac{Y}{T}\right)^M T.\end{align} Taking $M\geq 1 + \frac{1}{1-\eta}$, the error term is thus $O\left(Y/T\right)$.

Next, by the same analysis, in the second term of \eqref{euler maclaurin} we either differentiate the exponential every single time, or we gain a factor of $T$ from differentiating $\tilde{h}$ or one of the $\cos\left(\pi\alpha/T\right)$'s produced from differentiating the exponential. Thus, since we differentiate at least twice, all but one term in the $k$-fold derivative is bounded by $Y/T^2$. The last remaining term, obtained by differentiating the exponential $k$ times, vanishes because $\tilde{\tilde{h}}$ has a zero at $0$.

Hence it remains to bound the first term of \eqref{euler maclaurin}. This we do by integrating by parts, via
\begin{align}\int e^{\phi(x)}f(x) dx\ = \ -\frac{1}{2\pi i}\int e^{\phi(x)}\left(\frac{f(x)}{\phi'(x)}\right)' dx.\end{align} We get
\begin{align}
&\int_{-\frac{T}{2}}^{\frac{T}{2}} e\left(Y\sin\left(\frac{\pi\alpha}{T}\right)\right)\tilde{\tilde{h}}\left(\frac{\pi Y}{T}\cos\left(\frac{\pi\alpha}{T}\right)\right) d\alpha \nonumber\\&\quad\quad=\ c_{10}\frac{Y}{T^2}\int_{-\frac{T}{2}}^{\frac{T}{2}} e\left(Y\sin\left(\frac{\pi\alpha}{T}\right)\right)\tilde{h}'\left(\frac{\pi Y}{T}\cos\left(\frac{\pi\alpha}{T}\right)\right)\sin\left(\frac{\pi\alpha}{t}\right)d\alpha\nonumber\\&\quad\quad\ \ll\ \  \frac{Y}{T}.
\end{align}

Hence we obtain the bound \begin{align}S_h(Y)\ \ll \  \frac{Y}{T}.\end{align} Thus
\begin{align}\sum_{k\geq 0} (-1)^k J_{2k+1}\left(\frac{4\pi p^{\ell/2}}{c}\right) (2k+1)h_T\left(\frac{2k+1}{2}i\right)\ \ll \  \frac{p^{\ell/2}}{c}.\end{align}

That is, this tells us that
\begin{eqnarray}
& &\sum_{\ell=1}^2\sum_p \frac{\log{p}}{p^{\ell/2}\log{T}}\widehat{\phi}\left(\frac{\ell\log{p}}{2\log{T}}\right)\sum_{c\geq 1} \frac{S(p^\ell,1;c)}{c}\nonumber\\ & & \ \ \ \ \ \ \ \ \ \cdot\ \left[\sum_{k\geq 0} (-1)^k J_{2k+1}\left(\frac{4\pi p^{\ell/2}}{c}\right) (2k+1)h_T\left(\frac{2k+1}{2}i\right)\right]
\nonumber\\& & \ll \  \sum_{\ell=1}^2\sum_p \frac{\log{p}}{p^{\ell/2}\log{T}}\left|\widehat{\phi}\left(\frac{\ell\log{p}}{2\log{T}}\right)\right|\sum_{c\geq 1} c^{-\frac{1}{2}+\eps} \left[\frac{p^{\ell/2}}{c}\right]
\nonumber\\& & \ll \  \sum_{\ell=1}^2\sum_{p^{\ell/2}\leq T^\eta} \frac{\log{p}}{\log{T}}
\nonumber\\& & \ll \  \frac{T^{2\eta}}{\log{T}}.
\end{eqnarray}
For $\eta < 1$ this is negligible upon division by the total mass (which is of order $T^2$), completing the proof. \hfill $\Box$

\subsection{Proof of Theorem \ref{maintheoremthree}}

Before proceeding it bears repeating that the same limiting support as $K$ gets large (namely, $(-1,1)$) can be achieved by just trivially bounding as above (that is, without exploiting cancellation in sums of Kloosterman sums), but we present here an argument connecting the Kuznetsov formula to the Petersson formula as studied in \cite{ILS} instead.

It suffices to study \begin{eqnarray}\mathcal{S} & \ :=\ & \sum_{\ell=1}^2\sum_p \frac{\log{p}}{p^{\ell/2}\log{T}}\widehat{\phi}\left(\frac{\ell\log{p}}{2\log{T}}\right)\sum_{c\geq 1} \frac{S(p^\ell,1;c)}{c}\nonumber\\ & & \ \ \ \ \ \ \cdot \ \sum_{k\geq 0} (-1)^k J_{2k+1}\left(\frac{4\pi p^{\frac{\ell}{2}}}{c}\right) (2k+1)H_T\left(\frac{2k+1}{2}i\right)\end{eqnarray} and bound $\mathcal{S}$ by something growing strictly slower than $T^2$. Let \begin{align}Q_k^*(m;c)\ :=\ 2\pi i^k\sum_p S(p,1;c)J_{k-1}\left(\frac{4\pi m\sqrt{p}}{c}\right)\widehat{\phi}\left(\frac{\log{p}}{\log{R}}\right) \frac{2\log{p}}{\sqrt{p}\log{R}},\end{align} exactly as in \cite{ILS}. Then this simplifies to (dropping constants) \begin{align}\sum_{c\geq 1} c^{-1}\sum_{k\geq 0} (2k+1)H\left(\frac{2k+1}{2T}i\right)Q_{2k+2}^*(1;c)\end{align} if we ignore the $\ell = 2$ term, which is insignificant by e.g.\ GRH for symmetric square $L$-functions on $\GL_3/\Q$ (as in \cite{ILS}).

In Sections 6 and 7 of \cite{ILS} they prove
\begin{thm}
Assume GRH for all Dirichlet $L$-functions. Then \begin{align}Q_k^*(m;c)\ \ll \  \tilde{\gamma}_k(z) m T^\eta k^\eps (\log(2c))^{-2}\end{align} where \begin{align}\tilde{\gamma}_k(z)\ :=\ \begin{cases} 2^{-k} & k\geq 3z\\ k^{-1/2} & \text{otherwise}\end{cases}\end{align} and \begin{align}z\ :=\ \frac{4\pi T^\eta}{c}.\end{align}
\end{thm}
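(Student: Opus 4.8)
The plan is to factor the estimate into a purely analytic part, governed by the Bessel function, and an arithmetic part, governed by the prime sum twisted by the Kloosterman sum, with GRH for Dirichlet $L$-functions entering only in the latter. I would first dispose of the Bessel factor, which produces $\tilde\gamma_k(z)$. Since $\widehat\phi$ is supported in $(-\eta,\eta)$ and $R\asymp T^2N$ (with $N=1$ here), the sum over $p$ is restricted to $p\le R^\eta$, so the Bessel argument satisfies $4\pi m\sqrt p/c\le mz$ with $z=4\pi T^\eta/c$. Standard bounds on $J_\nu$ then give the dichotomy: when the order $k-1$ exceeds roughly $3z$ the Bessel function is in its exponentially decaying regime, $J_{k-1}(x)\ll (ex/2k)^{k}\ll 2^{-k}$, while for smaller $k$ one uses the uniform oscillatory bound $J_{k-1}(x)\ll k^{-1/2}$. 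This is exactly $\tilde\gamma_k(z)$, and the Bessel factor, being slowly varying in $p$ on the relevant scale, can be pulled out by partial summation at the cost of controlling its total variation, which contributes the linear factor in $m$ (the scaling of the argument) and a harmless $k^\eps$.

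The heart of the argument is the twisted prime sum. Opening $S(p,1;c)=\sum_{a\bmod c}^{*} e((pa+\bar a)/c)$ and expanding each additive character $e(pa/c)$ into Dirichlet characters mod $c$ via Gauss sums converts the inner sum into a combination, over $\chi\bmod c$, of Gauss sums $\tau(\chi)\ll\sqrt c$ times twisted prime sums $\sum_p \chi(p)\,\frac{\log p}{\sqrt p}\,\widehat\phi(\tfrac{\log p}{\log R})\,J_{k-1}(\tfrac{4\pi m\sqrt p}{c})$. For each nonprincipal $\chi$ I would invoke GRH for $L(s,\chi)$ through the explicit formula for $\psi(x,\chi)=\sum_{n\le x}\Lambda(n)\chi(n)$, which under GRH gives $\psi(x,\chi)\ll x^{1/2}(\log cx)^2$; partial summation against the smooth weight (incorporating the $1/\sqrt p$, the test function, and the pulled-out Bessel factor) then yields square-root cancellation. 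Reassembling the Gauss-sum and character contributions against the outer $\sum_a^{*}$ and tracking the normalization $1/\log R$ produces the arithmetic factors $mT^\eta$ (from the length $R^{\eta/2}\asymp T^\eta$ of the effective prime range) together with the saving $(\log 2c)^{-2}$ in the modulus; the principal character $\chi_0$ and the $p\mid c$ terms are handled separately and shown to be negligible, and for the $\ell=2$ piece one similarly appeals to GRH for the symmetric-square $L$-functions, as noted in the statement.

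The main obstacle is the uniformity in $c$: one must extract genuine cancellation from the prime sums for every modulus $c$ simultaneously and package the polylogarithmic losses so that they collapse into the clean decaying factor $(\log 2c)^{-2}$, since this is precisely what is needed for the ensuing sum $\sum_c c^{-1}(\log 2c)^{-2}$ over moduli to converge. Controlling the interaction between the oscillation of $J_{k-1}(4\pi m\sqrt p/c)$ in $p$ and the character sum, i.e.\ justifying that the Bessel factor can be removed by partial summation without destroying the GRH cancellation, uniformly in $k$ and $c$, is the delicate technical point, and is exactly the content of Sections 6 and 7 of \cite{ILS}.
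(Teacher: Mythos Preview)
The paper does not prove this theorem at all: it is quoted verbatim as a result established in Sections~6 and~7 of \cite{ILS} and then applied as a black box. So there is no ``paper's own proof'' to compare against; the paper's entire treatment is the single sentence ``In Sections 6 and 7 of \cite{ILS} they prove\ldots''. Your proposal is not so much an alternative as a sketch of what \cite{ILS} actually does, and you acknowledge as much in your final sentence.

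That said, your outline of the \cite{ILS} argument is broadly faithful: separate off the Bessel factor by partial summation (yielding the $\tilde\gamma_k(z)$ dichotomy and the $m$, $k^\eps$ factors), open the Kloosterman sum and expand the additive characters into Dirichlet characters via Gauss sums, and then invoke GRH for $L(s,\chi)$ to get square-root cancellation in the resulting twisted prime sums, with careful bookkeeping to obtain the $(\log 2c)^{-2}$ saving. One small inaccuracy: the $\ell=2$ piece and the appeal to GRH for symmetric-square $L$-functions are not part of this theorem's statement or proof; that is a separate remark the paper makes when discarding the $\ell=2$ contribution before ever invoking this bound.
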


Hence the sum over $c$ now converges with no problem, and we may ignore it. What remains is
\begin{align}
&T^\eta \sum_{k\geq 0} (2k+1)H\left(\frac{2k+1}{2T}i\right)\tilde{\gamma}_k(z) k^\eps
\nonumber\\&\quad\quad\quad=\ T^\eta \left(\sum_{0\leq k\ \ll \  T^\eta} k^{\eps+\frac{1}{2}} H\left(\frac{2k+1}{2T}i\right) + \sum_{T^\eta\ \ll \  k} \frac{k^{1+\eps}H\left(\frac{2k+1}{2T}i\right)}{2^k}\right).
\end{align}

The second term in parentheses poses no problem. For the first term, using $H(x)\ll  x^K$, we see that the above is bounded by
\begin{align}
T^\eta\sum_{0\leq k\ \ll \  T^\eta} k^{\eps+\frac{1}{2}+K}T^{-K}\ \ll \  T^{\frac{5}{2}\eta-(1-\eta)K}.
\end{align}

Thus we need \begin{align}\eta\ <\ \frac{2+K}{\frac{5}{2}+K}\ = \ 1 - \frac{1}{5+2K}.\end{align} Again, by taking $K$ even larger we could have just trivially bounded throughout and not invoked \cite{ILS} or GRH, but the connection noted above may be of independent interest.

\subsection{Proof of Theorem \ref{maintheoremfour}}

By definition \begin{eqnarray} D_2(\mathcal{B}_1,u,\phi_1,\phi_2,R;w_T)& \ =\ & D_1(\mathcal{B}_1,u,\phi_1,R;w_T)D_1(\mathcal{B}_1,u,\phi_2,R;w_T)\nonumber\\ & & \ \ \  -\ 2D_1(\mathcal{B}_1,u,\phi_1\phi_2,R) + \delta_{(-1)^{\eps+\eps'+1},-1} \phi_1(0)\phi_2(0).\nonumber\\ \end{eqnarray} Averaging, we see that for $\widehat{\phi_1}$ and $\widehat{\phi_2}$ of sufficiently small support (actually $\left(-\frac{1}{2},\frac{1}{2}\right)$ would work fine), given our results above on one-level densities, up to negligible error \begin{eqnarray}D_2(\mathcal{B}_1,\phi_1,\phi_2,R;w_T) & \ :=\ & \Avg\left(D_1(u,\phi_1,R;w_T)D_1(u,\phi_2,R;w_T);\frac{w_T(t_u)}{||u||^2}\right)\nonumber\\ & & \ \ -\ (1-\mathcal{N}(-1))\phi_1(0)\phi_2(0) - 2\widehat{\phi_1}*\widehat{\phi_2}(0).\end{eqnarray} Now \begin{eqnarray} & & D_1(u,\phi_1,R;w_T)D_1(u,\phi_2,R;w_T) \ = \ \nonumber\\ & & \ \ \ \ \  \left(\widehat{\phi_1}(0)\frac{\log(1 + t_u^2)}{\log{R}} + \frac{\phi_1(0)}{2} - \sum_{\ell=1}^2\sum_p \frac{2\log{p}}{p^{\frac{\ell}{2}}\log{R}}\widehat{\phi_1} \left(\frac{\ell\log{p}}{\log{R}}\right)\right)\nonumber\\ & & \ \ \ \ \ \ \ \  \cdot\ \left(\widehat{\phi_2}(0)\frac{\log(1 + t_u^2)}{\log{R}} + \frac{\phi_2(0)}{2} - \sum_{\ell=1}^2\sum_p \frac{2\log{p}}{p^{\frac{\ell}{2}}\log{R}}\widehat{\phi_2}\left(\frac{\ell\log{p}}{\log{R}}\right)\right).\end{eqnarray}

Since $w_T$ is supported essentially around $t_u\asymp T$, the $\log(1+t_u^2)$ terms are all, up to negligible error, approximately $\log{R}$ (again we invoke the bound of \cite{Smi} on the $L^2$-norms occurring in the denominator). Also by an application of Kuznetsov, now with the inner product of two Hecke operators (namely $T_{p^\ell}$ and $T_{q^{\ell'}}$ for $p,q$ primes and $0\leq \ell,\ell'\leq 2$ --- this uses the results on the Bessel-Kloosterman term established above), we see that the resulting average is, up to negligible error,
\begin{eqnarray}
& & \Avg\left(D_1(u,\phi_1,R)D_1(\phi_2,u);\frac{w_T(t_u)}{||u||^2}\right)\ =\ \nonumber\\ & & \left(\widehat{\phi_1}(0) + \frac{\phi_1(0)}{2}\right)\left(\widehat{\phi_2}(0) + \frac{\phi_2(0)}{2}\right) + \sum_{\ell=1}^2\sum_p \frac{4\log^2{p}}{p^\ell\log^2{R}}\widehat{\phi_1}\left(\frac{\ell\log{p}}{\log{R}}\right)\widehat{\phi_2}\left(\frac{\ell\log{p}}{\log{R}}\right).\nonumber\\
\end{eqnarray}
That is, only the diagonal terms $p^\ell=q^{\ell'}$ matter. Now partial summation (and the prime number theorem, as usual) finishes the calculation.


\section{Appendix I: Contour integration}\label{sec:appendixAcontourintegration}

We prove Proposition \ref{prop:usefulexpansionbesselterm} below. We restate it for the reader's convenience.

\begin{prop} Let $T$ be an odd integer and $X\leq T$. Let $w_T$ equal $h_T$ or $H_T$, where these are the weight functions from Theorems \ref{maintheoremone} to \ref{maintheoremfour}. Then \begin{align}
\int_\R J_{2ir}(X)\frac{rw_T(r)}{\cosh(\pi r)}dr &\ = \  c_1\sum_{k\geq 0} (-1)^k J_{2k+1}(X) (2k+1) w_T\left(\left(k+\frac{1}{2}\right)i\right) \nonumber\\&\quad\quad\quad\left[ + \ c_2 T^2 \sum_{k\geq 1} (-1)^k J_{2kT}(X) k^2 h(k)\right] \\&\ = \  c_1\sum_{k\geq 0} (-1)^k J_{2k+1}(X) (2k+1) w_T\left(\left(k+\frac{1}{2}\right)i\right) \nonumber\\&\quad\quad\quad\left[+ O\left(Xe^{-c_3 T}\right)\right],
\end{align}
where $c_1$, $c_2$, and $c_3$ are constants independent of $X$ and $T$, and the terms in brackets are included if and only if $w_T = h_T$.
\end{prop}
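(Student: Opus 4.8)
The plan is to evaluate the integral $\int_\R J_{2ir}(X)\frac{rw_T(r)}{\cosh(\pi r)}dr$ by shifting the contour of integration downward from $\R$ to $\R - iY$ and letting $Y\to\infty$, collecting residues along the way. The key analytic inputs are: (i) the integrand $\frac{r}{\cosh(\pi r)}$ has simple poles at $r = \pm\left(k+\frac12\right)i$ for integers $k\geq 0$, coming from the zeros of $\cosh(\pi r)$; (ii) the Bessel function $J_{2ir}(X)$, when evaluated at $r = \left(k+\frac12\right)i$, degenerates to $\pm J_{2k+1}(X)$ (an integer-order Bessel function) up to a computable sign, which produces the factor $(-1)^k$ and the integer-order Bessel functions in the claimed formula; and (iii) the weight $w_T$ is holomorphic in the strip (or meromorphic, in the case of $h_T$, which has its own poles from the $\sinh(\pi r/T)$ in the denominator of \eqref{eq:hTdefn}). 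The first display is obtained by summing residues, and the weight $w_T\left(\left(k+\frac12\right)i\right)$ appears directly as the value of the weight at each pole.

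First I would set up the contour shift carefully, using the exponential decay estimates established in the excerpt — namely $H_T(ir)\ll \exp(\pi|r|/2T)$ and $h_T(r)\ll \exp(-\pi|r|/4T)$ — together with the standard asymptotics of $J_{2ir}(X)$ for large imaginary order to show that the integral along the horizontal segment at height $-iY$ tends to $0$ as $Y\to\infty$. This is where the hypothesis $X\leq T$ and the precise decay rates of the weights are essential: one must verify that the growth of $J_{2ir}(X)$ down the imaginary direction is dominated by the decay of $w_T$, so that no contribution survives at infinity. Next I would compute the residue at each pole $r=\left(k+\frac12\right)i$, extracting the constant $c_1$ (an elementary residue factor from $\frac{r}{\cosh \pi r}$) and identifying $J_{2i\cdot\left(k+\frac12\right)i}(X) = J_{-(2k+1)}(X) = (-1)^{2k+1}J_{2k+1}(X)$, which gives the sign pattern.

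The genuinely delicate part is the case $w_T = h_T$, where the weight itself contributes an additional family of poles from the zeros of $\sinh(\pi r/T)$, located at $r = ikT$ for nonzero integers $k$. These produce the bracketed secondary sum $c_2 T^2\sum_{k\geq 1}(-1)^k J_{2kT}(X)k^2 h(k)$, and I would compute their residues separately, noting that the factor $\frac{r}{T}h(ir/T)$ in the numerator evaluates to give the $k^2 h(k)$ weighting. The second equality in the statement — bounding this bracketed sum by $O\!\left(Xe^{-c_3 T}\right)$ — is then the main obstacle: I would estimate it using the small-argument bound $J_{2kT}(X)\ll (X/2)^{2kT}/(2kT)!$, which for $X\leq T$ forces severe decay in $T$; combined with the rapid (Schwartz) decay of $h(k)$ in $k$, the geometric-type series sums to the claimed exponentially small quantity. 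I expect verifying the uniformity of the contour-shift vanishing and correctly tracking the residue constants through the $\sinh$ poles to require the most care, but no single step should be conceptually hard once the pole structure is laid out.
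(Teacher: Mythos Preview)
Your strategy is correct and matches the paper's: shift the contour downward, collect residues from the zeros of $\cosh(\pi r)$ (and, for $h_T$, from the zeros of $\sinh(\pi r/T)$), and show the integral along the receding line vanishes. One technical point where the paper differs from your plan: rather than shifting the contour directly with $J_{2ir}(X)$ in the integrand, the paper first expands $J_{2ir}(X)$ as its defining series $\sum_m (-1)^m x^{2m+2ir}/(m!\,\Gamma(m+1+2ir))$, interchanges sum and integral, and then shifts each term separately. This buys simpler estimates: the vanishing at $\Im r = -R\to -\infty$ reduces to bounding $x^{2R}/\Gamma(m+1+2R+2is)$ by Stirling, and the residues are read off immediately as values of $1/\Gamma$, which re-sum to integer-order Bessel functions. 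Your direct approach requires uniform control of $J_\nu(X)$ for $\nu$ on the lines $2Y + 2i\R$ as $Y\to\infty$, which is doable but a bit more to check. Also note a small slip: shifting downward you hit $r = -(k+\tfrac12)i$, so $2ir = 2k+1$ and you get $J_{2k+1}(X)$ directly, not via $J_{-(2k+1)}$; the sign discrepancy is harmless since it is absorbed into $c_1$.
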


\begin{proof} In the proof below bracketed terms are present if and only if $w_T = h_T$.

Recall that \begin{align}J_\alpha(2x)\ =\ \sum_{m\geq 0} \frac{(-1)^m x^{2m+\alpha}}{m!\Gamma(m+\alpha+1)}.\end{align} By Stirling's formula, $\Gamma(m+\alpha+1)\cosh(\pi r)\gg |m+2ir+1|^{m+1}$. Hence by the Lebesgue Dominated Convergence Theorem (remember that $w_T(z)$ is of rapid decay as $|\Re{z}|\to\infty$) we may switch sum and integral to get
\begin{align}\label{appendix a step one}
&\int_\R J_{2ir}(X)\frac{rw_T(r)}{\cosh(\pi r)}dr \nonumber\\&\quad\quad \ =\ \sum_{m\geq 0}\frac{(-1)^m x^{2m}}{m!}\int_\R \frac{x^{2ir} rw_T(r)}{\Gamma(m+1+2ir)\cosh(\pi r)} dr,
\end{align}
where $X=:2x$.

Now we move the line of integration down to $\R - iR$, $R\not\in \Z + \frac{1}{2}$ (or $T\Z$ if $w_T = h_T$). To do this, we note the estimate (for $A\gg 1$)
\begin{align}
\int_{\pm A\to \pm A-iR} \frac{x^{2ir} rw_T(r)}{\Gamma(m+1+2ir)\cosh(\pi r)} dr&\ll \int_{\pm A\to \pm A-iR} x^{2R} |r| |m+2ir|^{-m} |w_T(r)| dr\nonumber\\&\ll _{T,R} A^{-2},
\end{align}
where again we have used the rapid decay of $w_T$ along horizontal lines, and $B\to B-iR$ denotes the vertical line from $B\in \C$ to $B-iR\in \C$. (Rapid decay also ensures the integral along $\R - iR$ converges absolutely.)

Note that the integrand \begin{align}\frac{x^{2ir} rw_T(r)}{\Gamma(m+1+2ir)\cosh(\pi r)}\end{align} has poles precisely at $r\in -i(\N + \frac{1}{2}) = \{-\frac{1}{2}i,-\frac{3}{2}i,\ldots\}$, and, if $w_T = h_T$, poles also at $r\in -iT\Z^+$. The residue of the pole at $r = -\frac{2k+1}{2}i$ ($k\geq 0$) is, up to an overall constant independent of $k$, \begin{align}\frac{x^{2k+1}}{\Gamma(m+1+(2k+1))} (-1)^k(2k+1)w_T\left(\frac{2k+1}{2}i\right).\end{align} If $w_T = h_T$, the residue of the pole at $r = -ikT$ ($k\geq 1$) is, up to another overall constant independent of $k$, \begin{align}\frac{x^{2kT}}{\Gamma(m+1+(2kT))} k^2T^2 \frac{h(k)}{\cos(\pi kT)} = \frac{x^{2kT}}{\Gamma(m+1+(2kT))} (-1)^k k^2T^2h(k).\end{align}

Hence the sum of \eqref{appendix a step one} becomes
\begin{align}
&\sum_{m\geq 0}\frac{(-1)^m x^{2m}}{m!}\int_\R \frac{x^{2ir} rw_T(r)}{\Gamma(m+1+2ir)\cosh(\pi r)} dr\nonumber\\&\quad\quad=\ c_1\sum_{m\geq 0} \frac{(-1)^m x^{2m}}{m!}\left(\sum_{0\leq k\ll R} \frac{x^{2k+1}}{\Gamma(m+1+(2k+1))} (-1)^k(2k+1)w_T\left(\frac{2k+1}{2}i\right) \right.\nonumber\\&\quad\quad\quad\quad\quad\left.+ \int_{\R - iR} \frac{x^{2ir} rw_T(r)}{\Gamma(m+1+2ir)\cosh(\pi r)} dr\right.\nonumber\\&\quad\quad\quad\quad\quad\quad\quad\left. \left[+ c_2\sum_{m\geq 0} \frac{(-1)^m x^{2m}}{m!}\sum_{0\leq k\ll R} \frac{x^{2kT}}{\Gamma(m+1+(2kT))} (-1)^k k^2 T^2h(k)\right]\right).
\end{align}

Now we take $R\to\infty$. Note that
\begin{align}
\int_{\R - iR} \frac{x^{2ir} rw_T(r)}{\Gamma(m+1+2ir)\cosh(\pi r)} dr&\ \ll\ \int_{\R - iR} x^{2R} |r||w_T(r)||m+2ir+1|^{-m-1-2R} dr\nonumber\\&\ \ll_{m}\ x^{2R} e^{\frac{\pi R}{2T}} R^{-m-1-2R},
\end{align}
again by Stirling and rapid decay of $w_T$ on horizontal lines (that is, $w_T(x+iy)\ll (1+x)^{-4}e^{\frac{\pi y}{2T}}$ since both $h$ and $H$ have all their derivatives supported in $\left(-\frac{1}{4},\frac{1}{4}\right)$). This of course vanishes as $R\to\infty$.

Hence we see that
\begin{eqnarray}
& & \int_\R J_{2ir}(X)\frac{rw_T(r)}{\cosh(\pi r)}dr \nonumber\\ & & \ \ \ \ \ \ =\  c_1\sum_{m\geq 0} \frac{(-1)^m x^{2m}}{m!}\sum_{k\geq 0} \frac{x^{2k+1}}{\Gamma(m+1+(2k+1))} (-1)^k(2k+1)w_T\left(\frac{2k+1}{2}i\right)\nonumber\\& & \ \ \ \ \ \ \ \ \ \ \ \left[+ c_2\sum_{m\geq 0} \frac{(-1)^m x^{2m}}{m!}\sum_{k\geq 0} \frac{x^{2kT}}{\Gamma(m+1+(2kT))} (-1)^k k^2 T^2h(k)\right].
\end{eqnarray}

Switching sums (via the exponential bounds on $w_T$ along the imaginary axis) and applying $J_n(X) = \sum_{m\geq 0} \frac{(-1)^m x^{2m+n}}{m!(m+n)!}$ gives us the claimed calculation. For the bound on the bracketed term, use \begin{align}J_n(ny)\ \ll\ \frac{y^n e^{n\sqrt{1-y^2}}}{(1+\sqrt{1-y^2})^n}\end{align} (see \cite{AS}, page 362) and bound trivially.
\end{proof}

\section{Appendix II: An exponential sum identity}\label{sec:appendixmanipulationfromAM}

The following proposition and proof are also used in \cite{AM}.

\begin{prop}
Suppose $X\leq T$. Then
\begin{align}\label{actual small bound}
S_J(X)\ &:=\ T\sum_{k\geq 0} (-1)^k J_{2k+1}(X) \frac{\tilde{\tilde{h}}\left(\frac{2k+1}{2T}\right)}{\sin\left(\frac{2k+1}{2T}\pi\right)}\nonumber\\&\ =\ c_8T\sum_{|\alpha| < \frac{T}{2}} e\left(Y\sin\left(\frac{\pi\alpha}{T}\right)\right) \tilde{\tilde{g}}\left(\frac{\pi Y}{T}\cos\left(\frac{\pi\alpha}{T}\right)\right) + O(Y),
\end{align}
where $c_8$ is some constant, $X =: 2\pi Y$, and for any $f$ set $\tilde{f}(x):=xf(x)$.
\end{prop}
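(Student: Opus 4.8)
The plan is to first turn the hypothesis that $T$ is odd into a \emph{Dirichlet-kernel identity}, which is exactly what produces both the range $|\alpha| < T/2$ and the exponential phases on the right-hand side. Writing $u_k := \frac{2k+1}{2T}$, I would observe that $(-1)^k = \sin\!\left(\frac{(2k+1)\pi}{2}\right) = \sin(\pi T u_k)$, so that
\begin{align}
\frac{(-1)^k}{\sin(\pi u_k)}\ =\ \frac{\sin(\pi T u_k)}{\sin(\pi u_k)}\ =\ \sum_{|\alpha| < T/2} e\!\left(\alpha u_k\right),
\end{align}
the last equality being the evaluation of the Dirichlet kernel of order $\frac{T-1}{2}$, valid \emph{precisely} because $T$ is odd (so the index set is a symmetric block of integers with $|\alpha|\le\frac{T-1}{2}<\frac{T}{2}$). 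Substituting this into $S_J(X)$ and interchanging the two sums (justified by the rapid decay of $\tilde{\tilde h}$, as $h$ is Schwartz) reduces everything to evaluating, for each fixed $\alpha$, the oscillatory sum $I_\alpha := \sum_{k \ge 0} J_{2k+1}(X)\,\tilde{\tilde h}(u_k)\, e(\alpha u_k)$, after which $S_J(X) = T\sum_{|\alpha|<T/2} I_\alpha$.

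To evaluate $I_\alpha$ I would insert $J_{2k+1}(X) = \frac{1}{2\pi}\int_{-\pi}^{\pi} e^{i(X\sin\phi - (2k+1)\phi)}\,d\phi$ and interchange sum and integral, so that the summand carries the phase $e^{i(2k+1)(\pi\alpha/T - \phi)}$. The inner sum over $k$ is then passed to its integral: with $2k+1 = 2Tu$ and spacing $\Delta u = 1/T$, it becomes $\asymp T\int_0^\infty \tilde{\tilde h}(u)\,e^{2iTu(\pi\alpha/T-\phi)}\,du \asymp \tfrac{T}{2}\,\widehat{\tilde{\tilde h}}\!\left(\tfrac{T\phi - \pi\alpha}{\pi}\right)$, using that $\tilde{\tilde h}$ is even. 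The crucial structural input is that $\tilde{\tilde h}(u) = u^2 h(u)$ has $\widehat{\tilde{\tilde h}}$ \emph{compactly supported} in $\left(-\tfrac14,\tfrac14\right)$ --- multiplication by $u^2$ is differentiation on the Fourier side and preserves the support of $\widehat{h}$ --- so this factor confines $\phi$ to the window $|\phi - \pi\alpha/T| < \pi/(4T)$.

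On that window I would substitute $\phi = \tfrac{\pi\alpha}{T} + \tfrac{\pi}{T}s$ and Taylor-expand $\sin\phi = \sin\tfrac{\pi\alpha}{T} + \tfrac{\pi s}{T}\cos\tfrac{\pi\alpha}{T} + O\!\left(\tfrac{s^2}{T^2}\right)$. The constant term yields the stationary phase $e^{iX\sin(\pi\alpha/T)} = e(Y\sin(\pi\alpha/T))$ (since $X = 2\pi Y$); the linear term turns the remaining integral into a Fourier inversion $\int \widehat{\tilde{\tilde h}}(s)\,e^{i\frac{\pi X}{T}\cos(\pi\alpha/T)\,s}\,ds$, reconstructing $\tilde{\tilde h}\!\left(\tfrac{X}{2T}\cos\tfrac{\pi\alpha}{T}\right) = \tilde{\tilde h}\!\left(\tfrac{\pi Y}{T}\cos\tfrac{\pi\alpha}{T}\right)$. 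Collecting constants gives $I_\alpha = c_8\, e(Y\sin(\pi\alpha/T))\,\tilde{\tilde h}\!\left(\tfrac{\pi Y}{T}\cos\tfrac{\pi\alpha}{T}\right) + (\text{error})$ for an explicit $c_8$, which upon multiplication by the prefactor $T$ is exactly the asserted main term (so $\tilde{\tilde g} = \tilde{\tilde h}$, i.e.\ $g = h$).

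The main obstacle is the error bookkeeping, and it is precisely where the stated $O(Y)$ arises. Three sources must each be controlled: (i) the quadratic phase term $-\tfrac{X}{2}\sin(\pi\alpha/T)(\pi s/T)^2 = O(Y/T^2)$, which by $|e^{i\delta}-1|\le|\delta|$ contributes $O(Y/T^2)$ to each $I_\alpha$, hence $T\cdot\#\{\alpha\}\cdot O(Y/T^2) = O(Y)$ after the outer $\sum_\alpha$ and the prefactor --- this is the binding term that sets the size of the error; (ii) the nonzero Poisson frequencies, which by the compact support of $\widehat{\tilde{\tilde h}}$ only reappear for $\phi$ near the endpoints $\pm\pi$ (where $\sin\phi\approx 0$) and are negligible; and (iii) the discrepancy between the one-sided sum $k\ge 0$ and the full integral, which near the stationary point only affects the overall constant and otherwise contributes lower order. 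The hypothesis $X\le T$ (equivalently $Y \le T/2\pi$) keeps $Y/T^2$ small and all Bessel arguments in a controlled regime, so that these estimates hold uniformly in $\alpha$ and assemble into the single error $O(Y)$.
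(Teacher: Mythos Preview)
Your overall architecture coincides with the paper's: Dirichlet kernel to produce the sum over $|\alpha|<T/2$, the Schl\"afli integral for $J_k$, a Fourier/Poisson step in $k$, and then a Taylor expansion of $\sin\phi$ about $\phi=\pi\alpha/T$. The quadratic remainder you isolate in (i) is precisely what forces the $O(Y)$.

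The gap is the step where you pass from the one-sided sum over $k\ge 0$ to $\tfrac12\widehat{\tilde{\tilde h}}$ ``using that $\tilde{\tilde h}$ is even.'' Evenness of $f$ gives $\int_0^\infty f(u)\cos(\omega u)\,du=\tfrac12\widehat f$, \emph{not} $\int_0^\infty f(u)e^{i\omega u}\,du=\tfrac12\widehat f$; the discarded sine transform is of the same size as what you keep. This is exactly where the one-sidedness of the $k$-sum bites, and your item (iii) waves it away. The paper's device is to first extend the sum to all $k\in\Z$ at the cost of replacing $h$ by $g(x):=\sgn(x)h(x)$, using that $\sum_\alpha$ is invariant under $\alpha\mapsto-\alpha$ together with $J_{-m}=(-1)^mJ_m$; after this, ordinary Poisson summation applies cleanly. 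Equivalently, you may symmetrize in $\alpha$ first so that only $\cos(2\pi\alpha u_k)$ survives, and \emph{then} your evenness step becomes legitimate. One consequence of the paper's route is that $g$ is only $C^{\ord_{z=0}h(z)-1}$, so $\widehat g$ has merely polynomial decay rather than compact support; your localization claim in (ii) must accordingly be relaxed (polynomial decay still suffices). Since $\tfrac{\pi Y}{T}\cos(\pi\alpha/T)>0$ for $|\alpha|<T/2$, one has $\tilde{\tilde g}=\tilde{\tilde h}$ at that argument, so your final identification $g=h$ in the displayed formula is correct---it is the passage to it that needs the symmetrization.
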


\begin{proof}
Observe that $k\mapsto \sin\left(\pi k/2\right)$ is supported only on the odd integers, and maps $2k+1$ to $(-1)^k$. Hence, rewriting gives
\begin{align}\label{actual two}
S_J(X)\ =\ T\sum_{k\geq 0 \atop k\not\in 2T\Z} J_k(X) \tilde{\tilde{h}}\left(\frac{k}{2T}\right)\frac{\sin\left(\frac{\pi k}{2}\right)}{\sin\left(\frac{\pi k}{2T}\right)}.
\end{align}
As
\begin{align}
\frac{\sin\left(\frac{\pi k}{2}\right)}{\sin\left(\frac{\pi k}{2T}\right)}\ =\ \frac{e^{\frac{\pi i k}{2}} - e^{-\frac{\pi i k}{2}}}{e^{\frac{\pi i k}{2T}} - e^{\frac{\pi i k}{2T}}}\ =\ \sum_{\alpha = -\left(\frac{T-1}{2}\right)}^{\frac{T-1}{2}} e^{\frac{\pi i k \alpha}{T}}
\end{align}
when $k$ is not a multiple of $2T$, we find that
\begin{align}\label{actual three}
S_J(X)\ =\ T\sum_{|\alpha| < \frac{T}{2}} \sum_{k\geq 0 \atop k\not\in 2T\Z} e\left(\frac{k\alpha}{2T}\right) J_k(X) \tilde{\tilde{h}}\left(\frac{k}{2T}\right).
\end{align}
Observe that, since the sum over $\alpha$ is invariant under $\alpha\mapsto -\alpha$ (and it is non-zero only for $k$ odd!), we may extend the sum over $k$ to the entirety of $\Z$ at the cost of a factor of 2 and of replacing $h$ by \begin{equation}\label{definition of g}g(x)\ := \ \sgn(x)h(x).\end{equation} Note that $g$ is as differentiable as $h$ has zeros at $0$, less one. That is to say, $\widehat{g}$ decays like the reciprocal of a degree $\ord_{z=0} h(z) - 1$ polynomial at $\infty$. This will be crucial in what follows.

Next, we add back on the $2T\Z$ terms and obtain
\begin{eqnarray}\label{actual four}
\frac{1}{2}S_J(X) &\ =\ & T\sum_{|\alpha| < \frac{T}{2}} \sum_{k\in \Z} e\left(\frac{k\alpha}{2T}\right) J_k(X) \tilde{\tilde{g}}\left(\frac{k}{2T}\right) - T^2\sum_{k\in \Z} J_{2kT}(X) k^2 h(k)
\nonumber\\ &=& T\sum_{|\alpha| < \frac{T}{2}} \sum_{k\in \Z} e\left(\frac{k\alpha}{2T}\right) J_k(X) \tilde{\tilde{g}}\left(\frac{k}{2T}\right) + O\left(Xe^{-c_4T}\right)\nonumber\\ & =: & V_J(X) + O\left(Xe^{-c_4T}\right),
\end{eqnarray}
where we have bounded the term $T^2\sum_{k\in \Z} J_{2kT}(X) k^2 h(k)$ trivially via $J_n(2x)\ll x^n/n!$.

Now we move to apply Poisson summation. Write $X=:2\pi Y$. We apply the integral formula (for $k\in \Z$) \begin{equation}J_k(2\pi x)\ =\ \int_{-\frac{1}{2}}^{\frac{1}{2}} e\left(kt - x\sin(2\pi t)\right) dt\end{equation} and interchange the sum and integral (via rapid decay of $g$) to get that
\begin{align}\label{actual five}
V_J(X)\ =\ T\sum_{|\alpha| < \frac{T}{2}} \int_{-\frac{1}{2}}^{\frac{1}{2}} \left(\sum_{k\in \Z} e\left(\frac{k\alpha}{2T} + kt\right) \tilde{\tilde{g}}\left(\frac{k}{2T}\right)\right) e\left(-Y\sin(2\pi t)\right) dt.
\end{align}
By Poisson summation, \eqref{actual five} is just (interchanging the sum and integral once more)
\begin{eqnarray}\label{actual six}
V_J(X) &\ =\ & T^2\sum_{|\alpha| < \frac{T}{2}} \sum_{k\in \Z}\int_{-\frac{1}{2}}^{\frac{1}{2}} \widehat{g}''\left(2T(t-k) + \alpha\right) e\left(-Y\sin(2\pi t)\right) dt \nonumber\\&=& c_5T\sum_{|\alpha| < \frac{T}{2}} \int_{-\infty}^\infty \widehat{g}''(t) e\left(Y\sin\left(\frac{\pi t}{T} + \frac{\pi \alpha}{T}\right)\right) dt\nonumber\\&=:&c_5W_g(X).
\end{eqnarray}

As \begin{equation}\sin\left(\frac{\pi t}{T} + \frac{\pi\alpha}{T}\right)\ =\ \sin\left(\frac{\pi\alpha}{T}\right) + \frac{\pi t}{T}\cos\left(\frac{\pi\alpha}{T}\right) - \frac{\pi^2 t^2}{T^2}\sin\left(\frac{\pi\alpha}{T}\right) + O\left(\frac{t^3}{T^3}\right),\end{equation} we see that
\begin{eqnarray}
& & W_g(X) \nonumber\\ & = \ & c_6T\sum_{|\alpha| < \frac{T}{2}} e\left(Y\sin\left(\frac{\pi\alpha}{T}\right)\right) \int_{-\infty}^\infty \widehat{g}''(t) e\left(\frac{\pi Y t}{T}\cos\left(\frac{\pi\alpha}{T}\right)\right) dt \nonumber\\ & & \ \ -\ c_7\frac{\pi^2 Y}{T}\sum_{|\alpha| < \frac{T}{2}} e\left(Y\sin\left(\frac{\pi\alpha}{T}\right)\right) \sin\left(\frac{\pi\alpha}{T}\right) \int_{-\infty}^\infty t^2\widehat{g}''(t) e\left(\frac{\pi Y t}{T}\cos\left(\frac{\pi\alpha}{T}\right)\right) dt \nonumber\\& & \ \ +\ O\left(\frac{Y}{T} + \frac{Y^2}{T^2}\right)
\label{actual seven}\\ &  = & c_8T\sum_{|\alpha| < \frac{T}{2}} e\left(Y\sin\left(\frac{\pi\alpha}{T}\right)\right) \tilde{\tilde{g}}\left(\frac{\pi Y}{T}\cos\left(\frac{\pi\alpha}{T}\right)\right) \nonumber\\& & \ \ -\ c_9\frac{Y}{T}\sum_{|\alpha| < \frac{T}{2}} e\left(Y\sin\left(\frac{\pi\alpha}{T}\right)\right) \sin\left(\frac{\pi\alpha}{T}\right) \tilde{\tilde{g}}''\left(\frac{\pi Y}{T}\cos\left(\frac{\pi\alpha}{T}\right)\right) \nonumber\\& &\ \ +\ O\left(\frac{Y}{T} + \frac{Y^2}{T^2}\right).
\end{eqnarray}
Now bound the second term trivially to get the claim.
\end{proof}

\ \\

\section*{Acknowledgements}

We thank Gergely Harcos, Andrew Knightly, Stephen D. Miller and Peter Sarnak for helpful conversations on an earlier version.  This work was done at the SMALL REU at Williams College, funded by NSF GRANT DMS0850577 and Williams College; it is a pleasure to thank them for their support. The second named author was also partially supported by the Mathematics Department of University College London, and the fifth named author was partially supported by NSF grants DMS0970067 and DMS1265673.

\ \\


\end{document}